\documentclass[10pt]{amsart}
\usepackage{amsfonts}
\usepackage{array}
\usepackage{tabularx}
\usepackage{arydshln}
\usepackage{amsmath}

\usepackage{amssymb}
\usepackage{physics}
\usepackage{graphicx}
\usepackage{caption}
\usepackage{subcaption}
\usepackage{multirow}
\usepackage{footmisc}
\providecommand{\U}[1]{\protect\rule{.1in}{.1in}}

\usepackage{xcolor}
\usepackage{latexsym}
\usepackage{amsmath}
\usepackage{amssymb}
\usepackage{mathrsfs}
\usepackage{graphicx}
\usepackage{color}
\usepackage{pgfpages}
\usepackage{ifthen}
\usepackage{leftidx,tensor}
\usepackage[T1]{fontenc}
\usepackage[latin1]{inputenc}
\usepackage{mathtools}
\usepackage{comment}
\usepackage{dsfont}
\usepackage[nocompress]{cite}
\usepackage[shortlabels]{enumitem}
\usepackage{aliascnt}
\usepackage[bookmarks=true,pdfstartview=FitH, pdfborder={0 0 0}, colorlinks=true,citecolor=red, linkcolor=blue]{hyperref}
\usepackage{nicefrac}
\newtheorem{thm}{Theorem}[section]
\newaliascnt{cor}{thm}
\newaliascnt{prop}{thm}
\newaliascnt{lem}{thm}
\newtheorem{cor}[cor]{Corollary}
\newtheorem{prop}[prop]{Proposition}
\newtheorem{lem}[lem]{Lemma}
\aliascntresetthe{cor}
\aliascntresetthe{prop}
\aliascntresetthe{lem}

\newaliascnt{defn}{thm}
\newaliascnt{asu}{thm}
\newaliascnt{con}{thm}

\aliascntresetthe{defn}
\aliascntresetthe{asu}
\aliascntresetthe{con}

\newcounter{stp}
\newcounter{stpi}
\newcounter{stpci}
\newcounter{stpiii}

\newaliascnt{rem}{thm}
\newaliascnt{exa}{thm}
\newaliascnt{masu}{thm}
\newaliascnt{nota}{thm}
\newaliascnt{sett}{thm}
\newtheorem{rem}[rem]{Remark}

\aliascntresetthe{rem}
\aliascntresetthe{exa}
\aliascntresetthe{masu}
\aliascntresetthe{nota}
\aliascntresetthe{sett}
\numberwithin{equation}{section}

\setlist[enumerate]{font = \normalfont}

\newcommand {\N}	{\mathbb{N}}

\newcommand {\R}	{\mathbb{R}}

\newcommand {\T}	{\mathbb{T}}

\renewcommand{\d}{\, \mathrm{d}}

\DeclareMathOperator{\divH}{div_{\H}}

\newcommand{\D}{\mathrm{D}}

\renewcommand{\H}{\mathrm{H}}

\newcommand{\sigmabar}{\bar{\sigma}}

	\newcommand{\dk}[1]{\partial_{#1}}
	\newcommand{\dt}{\dk{t}} 
	\newcommand{\dz}{\dk{z}} 
	
	\newcommand{\eps}{\varepsilon}
	\renewcommand{\phi}{\varphi}
	
	\renewcommand{\bar}[1]{\overline{#1}}
	\newcommand{\vbar}{\bar{v}}

	\renewcommand{\div}{\mathrm{div} \, }
	\newcommand{\nablaH}{\nabla_{\H}}
	\newcommand{\DeltaH}{\Delta_{\H}}

	\newcommand{\rC}{\mathrm{C}}
	\newcommand{\rL}{\mathrm{L}}
	\newcommand{\rW}{\mathrm{W}}
	\newcommand{\rH}{\H}

       \newcommand{\cV}{\mathcal{V}}
         \newcommand{\cH}{\mathcal{H}}
     \newcommand{\rI}{\mathrm{I}}

\title[Continuous Data Assimilation for Semilinear Equations: A General  Approach by Evolution Equations]{Continuous Data Assimilation for Semilinear Parabolic Equations: A General Approach by
Evolution Equations}

\author{Gianmarco Del Sarto}
\address{Technische Universit\"{a}t Darmstadt\\
Fachbereich Mathematik\\
	Schlossgartenstr.\ 7\\
	64289 Darmstadt\\
	Germany}
\email{delsarto@mathematik.tu-darmstadt.de}

\author{Matthias Hieber}
\address{Technische Universit\"at Darmstadt\\
	Fachbereich Mathematik\\
	Schlossgartenstr.\ 7\\
	64289 Darmstadt\\
	Germany}
\email{hieber@mathematik.tu-darmstadt.de}

\author{Filippo Palma}
\address{Universit\`a degli Studi della Campania L. Vanvitelli\\
	Dipartimento di Matematica e Fisica\\
	Via Vivaldi 43\\
	81100 Caserta\\
	Italy}
\email{filippo.palma@unicampania.it}

\author{Tarek Z\"{o}chling}
\address{Technische Universit\"at Darmstadt\\
	Fachbereich Mathematik\\
	Schlossgartenstr.\ 7\\
	64289 Darmstadt\\
	Germany}
\email{zoechling@mathematik.tu-darmstadt.de}

\begin{document}

\begin{abstract}
This article develops a general framework for continuous deterministic data assimilation for semilinear parabolic equations by means of evolution equations. Introducing a nudged model driven by
partial observations, the global well-posedness of the reference and the approximating systems is established under natural assumptions. In addition, it is shown that the approximating solution converges exponentially to the solution of the
reference system, provided the observational resolution and the nudging parameter are suitably chosen.
The approach allows us to consider many systems, such as the Allen-Cahn, Cahn-Hilliard, Sellers-type energy balance, and bidomain systems, for the first time. 
\end{abstract}

\maketitle
\section{Introduction}
The approach to semilinear parabolic equations via the theory of evolution equations has a long and rich
history and tradition. In many cases, one obtains via this method well-posedness results for these equations,
locally or globally, within the strong, weak, mild or variational setting, see e.\ g. \cite{Amann,Lunardi1995AnalyticSemigroups,MR3524106,GigaGigaSaal2010NPDE, Sohr2001NavierStokes,Veraar3,HieberRobinsonShibata2020NavierStokes}. In all of these approaches, a precise knowledge of the initial data and its regularity is a fundamental
ingredient in the analysis of these problems. We also refer to the setting of critical spaces for parabolic evolution equations, see e.\ g. \cite{BahouriCheminDanchin2011FourierAnalysis,PrussSimonettWilke2018CriticalSpaces, PrussWilke2018CriticalNSE}.

In many practical situations, however, the initial state of a system is only partially known or entirely unavailable. In these cases, one aims to reconstruct or approximate the solution
from partial observational data. This leads to the framework of \emph{data assimilation}, which has become an important research area in recent years, both within more theoretical and more 
applied communities \cite{WangSIAM, ReichCotter2015PFDA, Kalnay2003AtmosphericDA}. One often distinguishes between discrete or continuous-in-time data assimilation, with or without noise. We refer here to the monographs \cite{Stuart2015,Evensen}, and to the articles 
\cite{Hayden,MR4409797,Larios,Bessaih2015,TitiFoias}.

A seminal contribution to continuous data assimilation was made by Azouani, Olson, and Titi~\cite{MR3183055}, who proposed an algorithm for 
deterministic data assimilation for the two-dimensional Navier-Stokes equations. They introduced a nudged system which, like the reference system, admits a unique, global, strong
solution, and proved that the nudged solution converges exponentially in time to the reference solution in both the $\rL^2$- and the $\rH^1$-norms.

Many other models arising in mathematical physics have also been analysed within this framework, both in the context of strong and weak solutions, see e.g. \cite{FarhatJollyTiti2015Benard,MR4344886,Titialpha,Titiporus}. Most existing
analyses rely on the Faedo-Galerkin method to establish the existence of solutions to the nudged system, followed by convergence proofs based on \emph{a priori} estimates and Gronwall-type inequalities.

This paper aims to develop a general framework for continuous deterministic data assimilation for semilinear parabolic problems using the theory of evolution equations. Our approach
encompasses both an existence theory for the nudged system and a convergence result in a prescribed norm.

Let us begin by considering a semilinear evolution equation of the form
\begin{equation} \label{eq:ASE}
\left \lbrace \begin{aligned}
     u' + Au &= F(u),\\
     u(0) &= u_0,
\end{aligned}
\right.
 \tag{ASE}
\end{equation}
where $A$ is the generator of an analytic semigroup on a Banach space $X$ and  $F$ is a given nonlinear mapping, however, the initial data is not known. In this scenario, uniquely determining the trajectory of the system is impossible.

The objective is now to construct a solution to an associated problem perturbed by partial observations (the nudged system), with known initial data, and
to show that the solution of this associated equation converges exponentially, for large times, to the solution of the original problem. More precisely, let $\rI_\delta(u(t))$ be
available measurements of the state. We then study the corresponding nudged system
\begin{equation}\label{eq:model dataintro}
    \left\{
    \begin{aligned}
        v'+Av&= F(v) -\mu  ( \rI_\delta  v -\rI_\delta \tilde{u}) ,\\
        v(0)&=v_0,
    \end{aligned}
    \right.
\end{equation}
where $\tilde{u}$ denotes a suitably shifted solution of \eqref{eq:ASE}, as clarified in detail in the next section.

To establish long-term predictability of $u$, we first verify that \eqref{eq:ASE}, possibly up to a positive time shift, is globally well-posed for an appropriate class of initial data. We then
formulate general assumptions on the operator $A$ and the nonlinearity $F$ ensuring global solvability of \eqref{eq:ASE} and prove that, for suitable choices of the parameters $\delta$ and $\mu$,
the nudged system is globally well-posed and its solutions converge exponentially in time to those of \eqref{eq:ASE}, or of a positively shifted variant. Note that we do not assume the
existence of a unique, global, strong solution to \eqref{eq:ASE}; rather, we show that our assumptions (A1), (A2) and (A3) below are sufficient to guarantee such a solution.

We will illustrate our abstract approach with several applications from fluid mechanics and biomedical modelling. In the context of strong solutions, we consider the two-dimensional
Navier-Stokes equations, the three-dimensional primitive equations, a Sellers-type energy balance model \cite{Sellers1969,North17}, and the two-dimensional bidomain model
\cite{Keener,ColliFranzone}. For weak solutions, we treat the two-dimensional Navier-Stokes equations, the one-dimensional Allen-Cahn equation \cite{AllenCahn}, and the one- and two-dimensional
Cahn-Hilliard equations \cite{CahnHilliard}.

We note that our approach allows us to consider the continuous data assimilation problem in the strong setting for both the energy balance model (\autoref{subsec:EBM}) and the two-dimensional bidomain model
(\autoref{subsec:bidomain}) for the first time. In the weak setting, we obtain by our method new results for the one-dimensional Allen-Cahn equation and for the one- and two-dimensional
Cahn-Hilliard equations.

The paper is organised as follows. In \autoref{sec:prelim and main}, we introduce the general framework and develop an existence theory for the initial boundary value problem associated with \eqref{eq:ASE}. We then establish general assumptions ensuring global-in-time solvability of \eqref{eq:ASE}, up to a positive shift, and prove our main results concerning the solvability and convergence of the data assimilation system. In \autoref{sec: Example}, we apply the theory in the strong setting to models motivated by climate science and biomedical applications. Finally, in \autoref{sec: Example-weak}, we present some examples in the weak setting.

\section{Preliminaries and Main results} \label{sec:prelim and main}
\noindent
Let $(\cV,\cH,\cV^\ast)$ be a \emph{Gelfand triple} of real Hilbert spaces, meaning the embeddings 
$$
\cV \hookrightarrow \cH \hookrightarrow \cV^\ast
$$
are dense and continuous, and the pairing between $\cV$ and $\cV^\ast$ satisfies $$
\langle u,v \rangle_{\cV,\cV^\ast} = (u,v)_\cH , \qquad \forall \, u \in \cV, \ v \in \cH.
$$ 
We assume that for the real interpolation space it holds $(\cV^\ast, \cV)_{\frac{1}{2},2}= \cH$, where $(\cdot, \cdot)_{\theta,p}$ denotes the real interpolation functor for $\theta \in (0,1)$ and $p\in (1,\infty)$. We further denote the complex interpolation space $[\cV^\ast,\cV]_\beta$ by $\cV_\beta$ for $\beta \in (0,1)$. Moreover, for $m \in \mathbb{N}$ and $q \in (1,\infty)$, we denote by  $\rL^q(\Omega)$ and $\rH^{m,q}(\Omega) = \rW^{m,q}(\Omega)$ respectively the Lebesgue and Sobolev spaces and we refer to their norm as $\|\cdot\|_q$ and $\|\cdot\|_{m,q}$. For more information on function spaces we refer e.g. to \cite{AdamsFournier,Lunardi2018}.

Given a bounded operator $A \colon \cV \to \cV^\ast$ and $u_0 \in \cH$, consider the following semi-linear parabolic evolution equation
\begin{equation}\label{eq:model}
    \left\{
    \begin{aligned}
        u'+Au&=F(u) , \quad t \in (0,T),\\
        u(0)&=u_0.
    \end{aligned}
    \right.
\end{equation}
We impose the following conditions:

    \begin{description}
        \item[(A1)] 
        $A$ is quasi-coercive, i.\ e., for all $u\in \cV$ it holds that
        \begin{equation*}
            \langle Au,u \rangle_{\cV^\ast,\cV} \geq \alpha \|  u \|^2_\cV - \omega \| u \|^2_\cH \ \text{ for some } \ \alpha>0 \text{ and }\omega \geq 0.
        \end{equation*}
        \item[(A2)]
        $F \colon \cV_\beta \to \cV^\ast$ satisfies for all $u_1$, $u_2 \in \cV_\beta$ the estimate
        \begin{equation*}
            \| F(u_1) - F(u_2) \|_{\cV^\ast} \leq C \sum_{j=1}^k \bigl ( 1+ \| u_1 \|^{\rho_j}_{\cV_{\beta}} + \| u_2 \|^{\rho_j}_{\cV_{\beta}} \bigr ) \| u_1 -u_2 \|_{\cV_{\beta_j}}
        \end{equation*}
        for a constant $C>0$ and numbers $k \in \N$, $\rho_j \geq 0$, $\beta \in (\frac{1}{2},1)$ and $\beta_j \in (\frac{1}{2}, \beta ]$.
        \item[(A3)]
        For all $j=1,\dots,k$ assume
        \begin{equation*}
            \rho_j \bigl (\beta_j -\frac{1}{2} \bigr ) + \beta \leq 1.
        \end{equation*}
    \end{description}

\noindent

\begin{rem}\label{rem: bilinear}{\rm 
        Many fluid dynamics models, such as the Navier-Stokes equations, are given by the abstract formulation 
        \begin{equation*}\label{eq:model fluid}
    \left\{
    \begin{aligned}
        u'+Au&=\Phi(u,u) , \quad t \in (0,T), \\
        u(0)&=u_0,
    \end{aligned}
    \right.
\end{equation*}
where $\Phi \colon \cV_\beta \times \cV_\beta \to \cV^\ast$ is bilinear and bounded, thus $\|\Phi(u,u)\|_{\cV^\ast} \leq C \| u \|^2_{\cV_\beta}$. If $\beta \leq \nicefrac{3}{4}$, such bilinear function $\Phi$ naturally satisfies $\bf(A2)$ and $\bf(A3)$ with $j=1$, $\rho=\rho_1=1$. 
    }
\end{rem}
\begin{rem}
   {\rm  Note that, from $\bf(A1)$, it follows that $-A$ generates an analytic semigroup on $\cV^\ast$, which by general theory admits maximal $\rL^2$-regularity.}
\end{rem}
\noindent
Based on the above assumptions, we recall the following result on the local well-posedness of the abstract evolution equation \eqref{eq:model}, see \cite[Theorem 18.2.6]{Veraar3}.
\begin{lem}\label{lem: local}
    Let $T>0$ and assume that $\bf(A1)-\bf(A3)$ are satisfied. Then for any $u_0 \in \cH$, there exists $a = a(u_0)\leq T$ such that problem \eqref{eq:model} admits a unique solution
    \begin{equation}
        u \in \rL^2(0,a;\cV) \cap \rH^1(0,a;\cV^\ast) \cap \mathrm{BUC}([0,a];\cH).
    \end{equation}
    The solution exists on a maximal time interval $[0,a_{\mathrm{max}}(u_0))$ and depends continuously on the data. If the solution does not exist globally in time, i.\ e., if $a_{\mathrm{max}} <T$, the maximal existence time is characterized by a blow-up 
    \begin{equation}
        \label{eq: blow up}
          \lim\limits_{t \to a_{\mathrm{max}}} \| u \|_{ \rL^2(0,t;\cV) \cap \rH^1(0,t;\cV^\ast) } = \infty.
    \end{equation}
\end{lem}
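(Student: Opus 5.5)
The plan is to treat \eqref{eq:model} as a fixed-point problem in the maximal-regularity space
\[
\mathbb{E}(a) := \rL^2(0,a;\cV) \cap \rH^1(0,a;\cV^\ast),
\]
as in the critical-space theory of \cite[Theorem 18.2.6]{Veraar3}. By $\bf(A1)$ and the subsequent remark, $A+\omega$ is coercive, so $-A$ generates an analytic semigroup on $\cV^\ast$ with maximal $\rL^2$-regularity. The first step is the trace result: since $(\cV^\ast,\cV)_{\frac12,2}=\cH$, we have the embedding $\mathbb{E}(a)\hookrightarrow \mathrm{BUC}([0,a];\cH)$ with a constant that stays uniform in $a$ when we restrict to functions vanishing at $t=0$. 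We fix the reference solution $u^\ast$ of the linear problem $u'+Au=0$, $u(0)=u_0$, which belongs to $\mathbb{E}(a)$ for every $a$. We then define
\[
\Gamma(u) := u^\ast + (\partial_t + A)^{-1}_0 F(u),
\]
where $(\partial_t+A)^{-1}_0$ solves the linear problem with zero initial value, and we look for a fixed point of $\Gamma$ in a small ball around $u^\ast$.

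The key step is to estimate $F$ in $\rL^2(0,a;\cV^\ast)$ by means of $\bf(A2)$. We use the mixed-derivative embedding
\[
\mathbb{E}(a)\hookrightarrow \rH^{\theta,2}(0,a;[\cV^\ast,\cV]_{1-\theta}) \hookrightarrow \rL^{r}(0,a;\cV_\gamma),
\]
where Sobolev embedding in time requires $\frac{1}{r}=\frac12-(1-\gamma)+\frac12=\gamma-\frac12$ for $\gamma\in(\frac12,1)$. Thus $u\in\rL^{1/(\beta-\frac12)}(\cV_\beta)$ and $u_1-u_2\in\rL^{1/(\beta_j-\frac12)}(\cV_{\beta_j})$. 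Hölder's inequality bounds the $\rL^2$-norm of each term $\|u\|^{\rho_j}_{\cV_\beta}\|u_1-u_2\|_{\cV_{\beta_j}}$ provided
\[
\rho_j\Bigl(\beta-\tfrac12\Bigr)+\beta_j-\tfrac12\le \tfrac12 .
\]
We check that this follows from $\bf(A3)$, using $\beta_j\le\beta$. The borderline case of equality is critical: there no power of $a$ is gained, and smallness must come from elsewhere. We therefore take $a$ small so that $\|u^\ast\|_{\rL^r(0,a;\cV_\gamma)}$ is small; this holds by absolute continuity of the integral, since these norms are finite. We also use a small radius, and we use the zero-trace embedding constants, which are uniform in $a$. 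With these choices $\Gamma$ is a contraction on the ball. I expect this bookkeeping to be the main obstacle, namely getting the exponents and the uniformity of the constants right. The lower-order terms, the $1$ in $\bf(A2)$, are handled trivially because they are subcritical and produce positive powers of $a$.

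Uniqueness follows from the same contraction estimate applied on short intervals, combined with a continuation argument. Continuous dependence follows because $u^\ast$ depends linearly on $u_0\in\cH$, and the smallness condition can be made uniform in a neighborhood of $u_0$ by the same absolute-continuity argument. Maximal solutions are obtained by restarting at $u(a)\in\cH$ and gluing the pieces. For the blow-up criterion, suppose $a_{\max}<T$ but the $\mathbb{E}(t)$-norm stays bounded as $t\to a_{\max}$. Then $u\in\mathbb{E}(a_{\max})$, so $F(u)\in\rL^2(0,a_{\max};\cV^\ast)$, and $u$ extends continuously to $a_{\max}$ with a value in $\cH$. The tails $\|u\|_{\rL^r(t,a_{\max};\cV_\gamma)}$ tend to zero, so the local step applied near $a_{\max}$ gives a uniform existence time. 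This contradicts maximality, and hence \eqref{eq: blow up} holds.
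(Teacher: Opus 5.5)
\textbf{Verdict: there is a genuine gap.} Your strategy is the standard argument behind the critical variational result that the paper simply cites, \cite[Theorem 18.2.6]{Veraar3}. It is a contraction in $\mathbb{E}(a)=\rL^2(0,a;\cV)\cap\rH^1(0,a;\cV^\ast)$ around the linear solution $u^\ast$, using
\begin{itemize}
\item the trace embedding into $\mathrm{BUC}([0,a];\cH)$,
\item the embedding $\mathbb{E}(a)\hookrightarrow\rL^{1/(\gamma-\frac12)}(0,a;\cV_\gamma)$ (note $\frac12-(1-\gamma)=\gamma-\frac12$; the extra $+\frac12$ in your formula is a slip),
\item smallness of $u^\ast$ from absolute continuity in the critical case.
\end{itemize}
The surrounding steps are fine: zero-trace constants uniform in $a$, subcritical lower-order terms, uniqueness on short intervals, restarting, and the blow-up criterion.

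The gap is the sentence claiming that your H\"older condition follows from \textbf{(A3)} using $\beta_j\le\beta$. You need $\rho_j(\beta-\frac12)+\beta_j\le 1$, while \textbf{(A3)} says $\rho_j(\beta_j-\frac12)+\beta\le 1$. The two left-hand sides differ by $(\rho_j-1)(\beta-\beta_j)$, which is positive whenever $\rho_j>1$ and $\beta_j<\beta$. For example, $\rho_j=\frac32$, $\beta=\frac{17}{20}$, $\beta_j=\frac35$ gives equality in \textbf{(A3)}, but $\rho_j(\beta-\frac12)+\beta_j=\frac98>1$. So the implication holds only if $\rho_j\le 1$ or $\beta_j=\beta$.

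This cannot be repaired by redistributing exponents. The time integrability $\rL^{1/(\gamma-\frac12)}$ of $\cV_\gamma$-norms is optimal for $\mathbb{E}(a)$, by scaling of the heat equation. Your condition is exactly the criticality threshold: for $F(u)=|u|^\rho u$ with $\cV=\rH^1(\R^d)$, $\cH=\rL^2(\R^d)$, $d\ge 3$, it reduces to the $\rL^2$-critical bound $\rho\le 4/d$ for every admissible splitting of exponents.

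The numbers above are realised by $F(u)=|u|^{3/2}u$ on $\R^3$. Use H\"older in $\rL^{15/4}\times\rL^{30/13}$, Sobolev from $\cV_{17/20}=\rH^{7/10}$ and $\cV_{3/5}=\rH^{1/5}$, and $\rL^{6/5}\hookrightarrow\rH^{-1}$. Then \textbf{(A1)}--\textbf{(A3)} hold, yet the problem is $\rL^2$-supercritical. The Haraux--Weissler self-similar solutions then give a nonzero solution in $\mathbb{E}(a)$ with $u_0=0$; together with $u\equiv 0$, this violates uniqueness.

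So with \textbf{(A3)} as printed the statement itself fails: \textbf{(A3)} has $\beta$ and $\beta_j$ interchanged relative to \textbf{(A2)}. Your proof is correct if you assume $\rho_j(\beta-\frac12)+\beta_j\le 1$ directly. When $\beta_j=\beta$ this coincides with \textbf{(A3)}, i.e.\ $(\rho_j+1)(2\beta_j-1)\le 1$, which is the form used in the critical variational setting and in the Navier--Stokes, Allen--Cahn and Cahn--Hilliard examples. State this hypothesis explicitly rather than deriving it from \textbf{(A3)}.
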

\noindent
Now let $T>0$, $u_0 \in \cH$ be given and suppose that $\bf(A1)-\bf(A3)$ hold. We denote by $u$ the corresponding maximal solution provided in \autoref{lem: local}. To guarantee global-in-time existence, we impose the following crucial assumption.
    \begin{description}
        \item[(A4)] Let $t_+>0$ and $u \in \rL^2(0,t;\cV) \cap \rH^1(0,t;\cV^\ast)$ for all $t < t_+$. Assume that $ \lim\limits_{t \to t_+} \| F(u) \|_{ \rL^2(0,t;\cV^\ast)} < \infty$ and $\| u (t) \|^2_{\cH}$ is integrable on $(0,\infty)$. 
    \end{description}
\noindent
Denoting by $\omega \geq 0$ the constant specified in $\bf(A1)$, the latter assumption guarantees that the shifted problem 
\begin{equation}\label{eq:model shift}
    \left\{
    \begin{aligned}
        \tilde{u}'+(A+\omega)\tilde{u}&=F(\tilde{u}) , \quad t \in (0,T),\\
        \tilde{u}(0)&=u_0.
    \end{aligned}
    \right.
\end{equation}
admits a unique, global strong solution. 
\begin{cor}[Global well-posedness of \eqref{eq:model shift}]\label{cor: global shift} \mbox{} \\
    Assume that $\bf(A1)-\bf(A3)$ hold. Let $u_0 \in \cH$ and assume that the solution $\tilde{u}$ of \eqref{eq:model shift} satisfies $\bf(A4)$. Then there is a unique, global solution $\tilde{u}$ of \eqref{eq:model shift} satisfying
    \begin{equation*}
        \tilde{u} \in \rL^2(0,\infty;\cV) \cap \rH^1(0,\infty;\cV^\ast).
    \end{equation*}
\end{cor}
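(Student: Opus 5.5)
The plan is to combine the local theory of \autoref{lem: local} with the blow-up criterion \eqref{eq: blow up} and a maximal-regularity estimate whose constant does not depend on the length of the time interval.

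\textbf{Step 1: local theory for the shifted operator.} The operator $A+\omega$ satisfies $\bf(A1)$ with constant $0$, since
\begin{equation*}
\langle (A+\omega)u,u\rangle_{\cV^\ast,\cV}\ge \alpha\|u\|_\cV^2 .
\end{equation*}
The nonlinearity $F$ is unchanged, so $\bf(A2)$ and $\bf(A3)$ still hold. Hence \autoref{lem: local} applies to \eqref{eq:model shift} for every finite $T>0$. It yields a unique maximal solution $\tilde u$ on $[0,a_{\max})$, together with the blow-up characterization \eqref{eq: blow up}.

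\textbf{Step 2: uniform-in-time maximal regularity.} Set $B:=A+\omega$. Since $B$ is coercive on $\cV$, the Lax--Milgram/Lions theory gives the following for $g\in\rL^2(0,t;\cV^\ast)$ and $w_0\in\cH$. The problem $w'+Bw=g$, $w(0)=w_0$, has a unique solution, and testing with $w$ yields
\begin{equation*}
\|w\|_{\rL^\infty(0,t;\cH)}^2+\alpha\|w\|_{\rL^2(0,t;\cV)}^2\le \|w_0\|_\cH^2+\tfrac{1}{\alpha}\|g\|_{\rL^2(0,t;\cV^\ast)}^2 .
\end{equation*}
Combined with $w'=g-Bw$ and the boundedness of $B\colon\cV\to\cV^\ast$, this gives
\begin{equation*}
\|w\|_{\rL^2(0,t;\cV)\cap\rH^1(0,t;\cV^\ast)}\le C\bigl(\|w_0\|_\cH+\|g\|_{\rL^2(0,t;\cV^\ast)}\bigr),
\end{equation*}
with $C$ independent of $t$.

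\textbf{Step 3: applying the estimate to $\tilde u$.} Take $g=F(\tilde u)$. On each $[0,t]$ with $t<a_{\max}$, the solution $\tilde u$ lies in $\rL^2(0,t;\cV)\cap\rH^1(0,t;\cV^\ast)$. Therefore $\bf(A4)$, applied with $t_+=a_{\max}$ or $t_+=\infty$, says $\sup_{t<t_+}\|F(\tilde u)\|_{\rL^2(0,t;\cV^\ast)}<\infty$. Step 2 then bounds $\|\tilde u\|_{\rL^2(0,t;\cV)\cap\rH^1(0,t;\cV^\ast)}$ uniformly in $t<a_{\max}$.

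Suppose $a_{\max}<T$. Then this uniform bound contradicts \eqref{eq: blow up}, so $a_{\max}=T$. Since $T$ was arbitrary and all the bounds are uniform in $T$, uniqueness allows us to patch the solutions together into a global solution on $[0,\infty)$. Letting $t\to\infty$ in the uniform bound gives $\tilde u\in\rL^2(0,\infty;\cV)\cap\rH^1(0,\infty;\cV^\ast)$. Uniqueness follows from the local uniqueness in \autoref{lem: local}.

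\textbf{Main obstacle and the role of the $\cH$-integrability.} The main obstacle is ensuring that the constant in Step 2 is independent of $t$; this is exactly why the shift by $\omega$ is introduced. The integrability of $\|\tilde u(t)\|_\cH^2$ in $\bf(A4)$ is a fallback if one works with $A$ instead of $A+\omega$. Testing with $w$ gives
\begin{equation*}
\tfrac{d}{dt}\|w\|_\cH^2+2\alpha\|w\|_\cV^2\le 2\omega\|w\|_\cH^2+\|g\|_{\cV^\ast}\|w\|_\cV .
\end{equation*}
The term $\omega\|w\|_{\rL^2(\cH)}^2$ is then controlled by $\bf(A4)$, which again gives a $t$-independent bound. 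I would include this remark so that both readings of the hypothesis are covered.
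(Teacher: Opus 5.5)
Your Steps 1 and 2 are sound and coincide with the first half of the paper's proof. For $B=A+\omega$ the energy estimate gives a maximal-regularity bound with a constant independent of $t$, and together with \textbf{(A4)} at $t_+=a_{\max}$ this rules out blow-up at any finite time.

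The gap is in the last part of Step 3, where you say all bounds are uniform in $T$ and let $t\to\infty$. In \textbf{(A4)}, $t_+>0$ plays the role of the maximal existence time $a_{\max}\le T<\infty$ from \autoref{lem: local}. The hypothesis only says that $\|F(\tilde u)\|_{\rL^2(0,t;\cV^\ast)}$ does not blow up as $t$ approaches a finite time. It does not say that $F(\tilde u)\in\rL^2(0,\infty;\cV^\ast)$. Your estimate therefore yields
\begin{equation*}
\|\tilde u\|_{\rL^2(0,T;\cV)\cap\rH^1(0,T;\cV^\ast)}\le C\bigl(\|u_0\|_\cH+\|F(\tilde u)\|_{\rL^2(0,T;\cV^\ast)}\bigr)
\end{equation*}
with a right-hand side that may grow with $T$. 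This gives only $\tilde u\in\rL^2_{\mathrm{loc}}(0,\infty;\cV)\cap\rH^1_{\mathrm{loc}}(0,\infty;\cV^\ast)$, which is exactly where the paper stands after its first step. Allowing $t_+=\infty$ would close your argument, but it would make the second half of \textbf{(A4)} superfluous. It is also not what the examples check: for Cahn--Hilliard only a pointwise-in-time bound on $\|F(\tilde u(t))\|_{\cV^\ast}$ is derived, which by itself gives no square integrability on $(0,\infty)$.

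The missing idea is the actual role of the integrability of $\|\tilde u(t)\|_\cH^2$. Your closing remark uses it to control the constant in the linear estimate, but for the coercive operator $A+\omega$ that constant is already $t$-independent, so it is not needed there. In the paper it drives the long-time step:
\begin{itemize}
\item Since $t\mapsto\|\tilde u(t)\|_\cH^2$ is continuous and integrable on $(0,\infty)$, $\inf_{t>0}\|\tilde u(t)\|_\cH=0$. So there is $t_0$ with $\|\tilde u(t_0)\|_\cH$ as small as desired.
\item Restart \eqref{eq:model shift} at $t_0$. The analytic semigroup generated by $-(A+\omega)$ has spectrum in $\mathbb{C}_+$ and decays exponentially, which gives a small-data global solution in $\rL^2(t_0,\infty;\cV)\cap\rH^1(t_0,\infty;\cV^\ast)$. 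The paper only sketches this step; it is where smallness makes the nonlinearity perturbative.
\item Combine this with your bound on $[0,t_0]$ and uniqueness to obtain the asserted global integrability.
\end{itemize}
Without this step, or another argument bounding $\|F(\tilde u)\|_{\rL^2(t_0,\infty;\cV^\ast)}$, your proof gives global existence but not $\tilde u\in\rL^2(0,\infty;\cV)\cap\rH^1(0,\infty;\cV^\ast)$.
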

\begin{proof}
    By \autoref{lem: local} there is a unique maximal solution $\tilde{u}$ of \eqref{eq:model shift}. In particular, \autoref{lem: local} and $\bf(A4)$ guarantee
    \begin{equation*}
       \lim\limits_{t \to a_{\mathrm{max}}} \| \tilde{u} \|_{\rL^2(0,t;\cV) \cap \rH^1(0,t;\cV^\ast)} \leq  C \lim\limits_{t \to a_{\mathrm{max}}} \bigl ( \| F(\tilde{u}) \|_{\rL^2(0,t;\cV^\ast)} + \| u_0 \|_\cH \bigr ) < \infty
    \end{equation*}
    and therefore $\tilde{u}\in \rL^2_{\mathrm{loc}}(0,\infty;\cV) \cap \rH^1_{\mathrm{loc}}(0,\infty;\cV^\ast) \cap \mathrm{BUC}((0,\infty);\cH)$. We note that the constant $C>0$ is independent of time since the operator $A+ \omega I$ is invertible, thanks to $\bf(A1)$. To extend $\tilde{u}$ to $T=\infty$ we note that the spectrum of the shifted operator \( A + \omega \) lies entirely in the right half of the complex plane, that is, \( \sigma(A + \omega) \subset \mathbb{C}_+ \).  Specifically, since the semigroup generated by $-A- \omega$ is analytic, it implies the exponential decay of the semigroup. This in turn implies that, given $\| \tilde u (t_0) \|_\cH$ sufficiently small for some $t_0 >0,$ then the solution extends to $T=\infty$. By $\bf(A4)$ and the above considerations, $\| \tilde{u} \|_\cH^2$ is integrable and continuous on $(0,\infty)$ and it follows that $\inf_{ t \in (0,\infty)} \| \tilde{u}(t) \|^2_\cH =0$. Hence, there is $t_0 \geq 0$ such that $\| \tilde{u}(t_0) \|^2_\cH$ is sufficiently small, which concludes the proof.
\end{proof}
\noindent
We are now in a position to introduce an approximate system corresponding to this solution \( \tilde{u} \), the so-called \emph{Data Assimilation system}. Let $\tilde{u}$ denote the unique, global solution of \eqref{eq:model shift}. Consider the system
\begin{equation}\label{eq:model data}\tag{\textcolor{blue}{DA}}
    \left\{
    \begin{aligned}
        v'+Av&= F(v) -\mu  ( \rI_\delta  v -\rI_\delta \tilde{u}) , \quad t \in (0,T),\\
        v(0)&=v_0,
    \end{aligned}
    \right.
\end{equation}
where $\mu>0$ denotes a constant referred to as \emph{nudging parameter} and $\rI_\delta$ denotes a linear operator satisfying 
\begin{equation} \label{eq: I_delta bound2}
    \langle  f-\rI_\delta f, g \rangle_{\cV^\ast, \cV} \leq C \delta \| f \|_\cH \| g \|_\cV  \ \text{ for all }\ f \in \cH \ \text{ and }\ g \in \cV.
\end{equation}
The operator $\rI_\delta$ models observational measurements of the system \eqref{eq:model} at a coarse spatial resolution with a scale $\delta$. From the bound \eqref{eq: I_delta bound2} on the observation operator $\rI_\delta$ we conclude the following result:
\begin{prop}[Global well-posedness of the Data Assimilation problem \eqref{eq:model data}]\label{prop: global DA} \mbox{} \\
Suppose that the observation operator $\rI_\delta$ satisfies \eqref{eq: I_delta bound2} and $v_0$, $u_0 \in \cH$. Moreover, assume that $\bf(A1)-\bf(A3)$ and $\bf(A4)$ hold and denote by $\tilde{u} \in \rL^2(0,\infty;\cV) \cap \rH^1(0,\infty;\cV^\ast)$ the corresponding global solution of equation \eqref{eq:model shift}. Then there exist $\mu_0$, $\delta_0>0$ such that for all $\mu>\mu_0$ and $0<\delta < \delta_0$ the problem \eqref{eq:model data} admits a unique, global solution $v \in \rL^2(0,\infty;\cV) \cap \rH^1(0,\infty;\cV^\ast)$. 
\end{prop}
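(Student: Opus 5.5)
Throughout we work with the difference $w := v - \tilde{u}$ rather than with $v$ itself. The first step is local existence for \eqref{eq:model data}. For this we view $\mu \rI_\delta$ as a bounded linear perturbation, and the forcing $\mu \rI_\delta \tilde{u}$ as a given datum. Writing $\rI_\delta = I - (I-\rI_\delta)$, the bound \eqref{eq: I_delta bound2} gives $\|\rI_\delta f\|_{\cV^\ast} \le C(1+\delta)\|f\|_\cH$. Hence $A + \mu\rI_\delta$ is again a quasi-coercive operator $\cV \to \cV^\ast$, and $\mathbf{(A1)}$ holds with a modified $\omega$. The datum satisfies $\mu\rI_\delta\tilde{u} \in \rL^2(0,\infty;\cV^\ast)$, because $\tilde u \in \rL^2(0,\infty;\cV) \hookrightarrow \rL^2(0,\infty;\cH)$. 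So the local theory of \autoref{lem: local} (with this inhomogeneity added, which the theory in \cite{Veraar3} allows) yields a unique maximal solution $v$ on $[0,a_{\max})$. Together with the blow-up criterion \eqref{eq: blow up}, it then suffices to prove an a priori bound on $\|v\|_{\rL^2(0,t;\cV)\cap\rH^1(0,t;\cV^\ast)}$ that is uniform in $t<a_{\max}$.

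The core of the argument is an energy estimate for $w$. Subtracting \eqref{eq:model shift} from \eqref{eq:model data} gives
\begin{equation*}
w' + Aw + \mu w = F(v) - F(\tilde u) - \omega\tilde{u} + \mu (w - \rI_\delta w).
\end{equation*}
We test this equation with $w$. Condition $\mathbf{(A1)}$ gives the lower bound $\alpha\|w\|_\cV^2 - \omega\|w\|_\cH^2$. The interpolation bound \eqref{eq: I_delta bound2} combined with Young's inequality yields
\begin{equation*}
\mu\langle w-\rI_\delta w,w\rangle \le C\mu\delta\|w\|_\cH\|w\|_\cV \le \tfrac{\alpha}{4}\|w\|_\cV^2 + C\mu^2\delta^2\|w\|_\cH^2,
\end{equation*}
so we choose $\delta$ such that $C\mu^2\delta^2 \le \mu/4$. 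The term $\omega\langle \tilde u, w\rangle$ is controlled by $\omega^2\|\tilde u\|_\cH^2/\mu + \tfrac{\mu}{4}\|w\|_\cH^2$, and $\|\tilde u\|_\cH^2$ is integrable on $(0,\infty)$.

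For the nonlinearity we use $\mathbf{(A2)}$ with the interpolation inequality $\|w\|_{\cV_{\beta_j}} \le C\|w\|_\cH^{2-2\beta_j}\|w\|_\cV^{2\beta_j-1}$, which holds because $\cV_{1/2}=\cH$. We also write $\|v\|_{\cV_\beta} \le \|\tilde u\|_{\cV_\beta} + \|w\|_{\cV_\beta}$. The terms involving only $\tilde u$ produce a Gronwall factor of the form
\begin{equation*}
\|\tilde u\|_{\cV_\beta}^{2\rho_j/(3-2\beta_j)}\|w\|_\cH^2 ,
\end{equation*}
after Young's inequality. This factor is integrable in time: by $\mathbf{(A3)}$, the exponent applied to $\|\tilde u\|_{\cV_\beta} \lesssim \|\tilde u\|_\cH^{2-2\beta}\|\tilde u\|_\cV^{2\beta-1}$ puts at most the power $2$ on $\|\tilde u\|_\cV$, and $\tilde u \in \rL^\infty(\cH)\cap\rL^2(\cV)$.

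The main obstacle is the genuinely superlinear term in $\|w\|_{\cV_\beta}^{\rho_j}$. By $\mathbf{(A3)}$ it is bounded by a quantity like $\|w\|_\cH^{2+\gamma}$ times $\|w\|_\cV^2$ at critical scaling. It can therefore be absorbed by $\tfrac{\alpha}{4}\|w\|_\cV^2$ only when $\|w\|_\cH$ is small, which we handle by a continuation argument. First pick $\mu$ large relative to $\omega$, and then $\delta$ small. Gronwall's inequality then gives
\begin{equation*}
\|w(t)\|_\cH^2 + \int_0^t\|w\|_\cV^2 \le C\bigl(\|v_0-u_0\|_\cH^2 + \textstyle\int_0^\infty\omega^2\|\tilde u\|_\cH^2/\mu\bigr),
\end{equation*}
for as long as the smallness persists; choosing $\mu_0$ large keeps it persisting. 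If $v_0-u_0$ is not small, we would instead first run on a finite interval, using that the local solution exists on bounded intervals with bounds depending only on data. I expect reconciling this smallness issue with $\mu,\delta$ depending only on the data to be the delicate point, and I would lean on $\mu$ large to make the $\mu\|w\|_\cH^2$ damping dominate.

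Once $w \in \rL^\infty(0,\infty;\cH)\cap\rL^2(0,\infty;\cV)$, the estimate $\mathbf{(A2)}$ gives $F(v)\in\rL^2(\cV^\ast)$. Maximal regularity then yields $w' \in \rL^2(\cV^\ast)$, so \eqref{eq: blow up} is excluded and $v=\tilde u+w$ lies in the stated class globally.
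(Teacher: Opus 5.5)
Your proposal has a genuine gap, and it sits exactly at the point you flag as delicate. Consider the regime where $\mathbf{(A3)}$ holds with equality, e.g.\ $\beta_j=\beta$ and $\rho_j(\beta-\tfrac12)+\beta=1$, as in the Navier--Stokes, Allen--Cahn and Cahn--Hilliard examples. There the purely nonlinear contribution $\|w\|_{\cV_\beta}^{\rho_j}\|w\|_{\cV_\beta}\|w\|_\cV$ interpolates to $C\|w\|_\cH^{\gamma}\|w\|_\cV^{2}$ with $\gamma=(2-2\beta)(\rho_j+1)>0$. For $\beta=\nicefrac34$, $\rho=1$ this is $C\|w\|_\cH\|w\|_\cV^2$.

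This term carries the same power of $\|w\|_\cV$ as the dissipation. It can therefore be absorbed by $\tfrac{\alpha}{4}\|w\|_\cV^2$ only while $\|w\|_\cH$ stays below a threshold fixed by $\alpha$ and $C$, and that threshold does not depend on $\mu$. The nudging supplies only the zeroth-order damping $\mu\|w\|_\cH^2$, which cannot offset a term quadratic in $\|w\|_\cV$, so taking $\mu$ large does not help. Your fallback of ``first running on a finite interval'' is circular: nothing controls how long the local solution lives, or makes $\|w(t)\|_\cH$ small at some time, without the very a priori bound you are trying to prove. The statement allows arbitrary $u_0,v_0\in\cH$, so for a non-small discrepancy $v_0-u_0$ your continuation argument does not close.

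The missing ingredient is $\mathbf{(A4)}$ applied to the data-assimilated solution $v$ itself. The remark after the proposition says explicitly that $\mathbf{(A4)}$ is to be verified for $v$. You use $\mathbf{(A4)}$ only for the integrability of $\|\tilde u\|_\cH^2$, and the paper's argument does not pass to the difference equation at this stage. Its steps are:
\begin{enumerate}
\item Using \eqref{eq: I_delta bound2} and Young's inequality, it shows that for $\mu$ large and $\delta$ small, $A+\mu\rI_\delta$ satisfies $\mathbf{(A1)}$ with $\omega=0$. This is where $\mu_0,\delta_0$ come from, and it makes the constants time-independent.
\item It tests \eqref{eq:model data} with $v$ and bounds $|\langle F(v),v\rangle_{\cV^\ast,\cV}|\le C\|F(v)\|_{\cV^\ast}^2+\eps\|v\|_\cV^2$. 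The forcing $\mu\rI_\delta\tilde u$ is handled by \eqref{eq: I_delta bound2} and $\tilde u\in\rL^2(0,\infty;\cH)$.
\item It controls $\|F(v)\|_{\rL^2(0,t;\cV^\ast)}$ as $t\to a_{\max}$ directly by $\mathbf{(A4)}$. The energy inequality is then linear, so Gronwall bounds $v$ in $\rL^\infty(0,t;\cH)\cap\rL^2(0,t;\cV)$ uniformly in $t<a_{\max}$, which excludes \eqref{eq: blow up}.
\item The extension to $(0,\infty)$ proceeds as in \autoref{cor: global shift}.
\end{enumerate}
With the nonlinearity controlled by hypothesis rather than by absorption, no smallness of $v_0-u_0$ is needed.

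Two minor slips, which do not affect the issue above. With $w=v-\tilde u$ the forcing is $+\omega\tilde u$, not $-\omega\tilde u$. Young's inequality applied to $\|\tilde u\|_{\cV_\beta}^{\rho_j}\|w\|_{\cV_{\beta_j}}\|w\|_\cV$ produces the exponent $\rho_j/(1-\beta_j)$, not $2\rho_j/(3-2\beta_j)$.
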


\begin{proof}
For an appropriate choice of $\mu$, $\delta>0$, the operator $A + \mu  \rI_\delta$ is readily seen to satisfy $\bf(A1)$. Specifically, since for the operator $A$ assumption $\bf(A1)$ holds, invoking the bound \eqref{eq: I_delta bound2} and Young's inequality, for $v\in \cV$ we compute
    \begin{equation*}
        \begin{aligned}
            \langle (A + \mu \rI_\delta)v,v \rangle_{\cV^\ast, \cV} &= \langle Av ,v \rangle_{\cV^\ast, \cV} +  \mu \langle   \rI_\delta v - v ,v \rangle_{\cV^\ast, \cV} + \mu \langle v,v \rangle_{\cV^\ast, \cV} \\&\geq \alpha \| v \|^2_\cV -C \mu \delta \| v \|_\cH \| v \|_\cV  + (\mu-\omega) \| v \|^2_\cH \\ &\geq  \bigl (  \alpha  - \eps \bigr )\| v \|^2_\cV   + \bigl (\mu-  \frac{\mu^2\delta^2}{4\eps}-\omega\bigr )\| v \|^2_\cH.
        \end{aligned}
    \end{equation*}
    Choosing $0 < \eps < \alpha$ and  $\delta >0$ appropriately small as well as $\mu>0$ sufficiently large guarantees that 
    \begin{equation*}
         \langle (A + \mu \rI_\delta)v,v \rangle_{\cV^\ast, \cV} \geq \alpha' \| v \|^2_{\cV} + \beta \|v \|_\cH^2 \ \text{ for some }\ \alpha'>0 \ \text{ and }\ \beta \geq 0.
    \end{equation*}
    Therefore, assumption ${\bf(A1)}$ is satisfied by $A+\mu I_\delta$ with $\omega=0$. \par
    Since the non-linear term $F$ satisfies $\bf(A1)-\bf(A3)$ we obtain a unique local maximal solution of the data assimilated problem \eqref{eq:model data}. Next, we verify $\bf(A4)$. Taking inner products of \eqref{eq:model data} with $v$ and adding $\mu \langle \tilde{u}- \tilde{u}, v \rangle_{\cV^\ast, \cV}$ on the right-hand side, yields in view of the above property of $A$ and \eqref{eq: I_delta bound2}
    \begin{equation}
        \begin{aligned}
            \dt \frac{1}{2}\| v \|^2_\cH + \alpha' \| v \|^2_\cV &\leq  |\langle F(v), v\rangle_{\cV^\ast, \cV}|  + \mu|\langle \tilde{u} - \mathrm{I}_\delta \tilde{u}, v \rangle_{\cV^\ast, \cV}| +\mu|  \langle \tilde{u},v \rangle_{\cV^\ast, \cV}| \\ &\leq C \bigl ( \| F(v) \|^2_{\cV^\ast} + \| \tilde{u} \|^2_\cH + \| \tilde{u} \|^2_{\cV^\ast} \bigr ) + \eps \| v \|^2_\cV.
        \end{aligned}
    \end{equation}
    Integrating in time, using Gronwall's inequality implies in view of $\bf(A4)$ and \autoref{cor: global shift}   
    \begin{equation*}
          \lim\limits_{t \to a_{\mathrm{max}}} \| v \|_{\rL^2(0,t;\cV) \cap \rL^\infty(0,t;\cH)} < \infty
    \end{equation*}
    and therefore $v \in \rL^2_{\mathrm{loc}}(0,\infty;\cV) \cap \rH^1_{\mathrm{loc}}(0,\infty;\cV^\ast) \cap \mathrm{BUC}((0,\infty);\cH)$. By similar arguments to those in the proof of \autoref{cor: global shift} the solution can be extended to $T= \infty$.
\end{proof}
\begin{rem}
 {\rm It remains to verify that \textup{(\textbf{A4})} holds for the data-assimilated solution \(v\) in all examples below. 
Observe that the data-assimilation problem \eqref{eq:model data} can be rewritten as the forced problem \eqref{eq:model shift} with the same nonlinearity and with
\[
A' \coloneqq A + \mu\,\rI_\delta .
\]
As in the proof of \autoref{prop: global DA}, one checks that \(A'\) satisfies \textup{(\textbf{A1})}, while the additional forcing term \(\rI_\delta \tilde{u}\) is controlled by the interpolant bounds in \eqref{eq: I_delta bound2}. 
Consequently, verifying \textup{(\textbf{A4})} for the data-assimilation setting is straightforward in practice, and we shall omit this check in the examples presented in \autoref{sec: Example} and \autoref{sec: Example-weak}.
}
\end{rem}
\noindent
Having established global in time existence of a unique solution $\tilde{u}$ to problem \eqref{eq:model}, as well as the global in time existence of a unique solution $v$ to the approximate  problem \eqref{eq:model data}, it is reasonable to consider the difference $w := \tilde{u}-v$, which corresponds to the evolution equation
\begin{equation}\label{eq:model difference}
    \left\{
    \begin{aligned}
       w'+Aw&= F(\tilde{u}) - F(v) -\mu  \rI_\delta w - \omega \tilde{u}, \quad t \in (0,\infty),\\
        w(0)&=w_0.
    \end{aligned}
    \right.
\end{equation}
We are now in a position to state our main theorems concerning the long time behaviour of the difference $w$, measured in the $\cH$-and $\cV$-norm. 

\begin{thm}[Convergence in the $\cH$-norm]\label{thm: V norm}\mbox{}\\
    Let $u_0$, $v_0 \in \cH$, suppose that $\bf(A1)-\bf(A4)$ are satisfied and the observation operator $\rI_\delta$ satisfies \eqref{eq: I_delta bound2}. Then, there exist $\mu_0$, $\delta_0>0$ such that for all $\mu>\mu_0$ and $0<\delta < \delta_0$ the unique, global solution of \eqref{eq:model difference} satisfies
        \begin{equation}
            \|  w(t) \|_\cH  \to 0 \ \text{ exponentially, as } \ t \to \infty.
        \end{equation}
        In particular, if $\omega=0$ in $\bf(A1)$, then the solution of the data assimilation system \eqref{eq:model data} converges exponentially fast to the solution $u$ of the original problem \eqref{eq:model}.
\end{thm}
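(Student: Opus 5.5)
We test \eqref{eq:model difference} with $w$ and run a Gronwall argument. The nudging term supplies the damping, and the nonlinear difference is absorbed using $\bf(A2)$--$\bf(A3)$ together with the integrability of $\tilde u$ and $v$ from \autoref{cor: global shift} and \autoref{prop: global DA}.

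\emph{Energy identity.} Since $w\in \rL^2(0,\infty;\cV)\cap\rH^1(0,\infty;\cV^\ast)$, the Lions--Magenes lemma gives $\tfrac12\dt\|w\|_\cH^2=\langle w',w\rangle$. Writing $\mu\rI_\delta w=\mu w+\mu(\rI_\delta w-w)$ and using $\bf(A1)$ and \eqref{eq: I_delta bound2} exactly as in the proof of \autoref{prop: global DA}, we get, for $\delta$ small relative to $\mu$ (i.e.\ $\mu\delta^2\lesssim\alpha$),
\begin{equation*}
\tfrac12\dt\|w\|_\cH^2+\tfrac{\alpha}{2}\|w\|_\cV^2+\bigl(\tfrac{\mu}{2}-\omega\bigr)\|w\|_\cH^2\le \langle F(\tilde u)-F(v),w\rangle+\omega|\langle\tilde u,w\rangle|.
\end{equation*}

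\emph{The nonlinearity.} By $\bf(A2)$ and the interpolation inequality $\|w\|_{\cV_{\beta_j}}\le C\|w\|_\cH^{2-2\beta_j}\|w\|_\cV^{2\beta_j-1}$ (which uses $(\cV^\ast,\cV)_{1/2,2}=\cH$ and reiteration), each summand is bounded by
\begin{equation*}
C(1+\|\tilde u\|_{\cV_\beta}^{\rho_j}+\|v\|_{\cV_\beta}^{\rho_j})\,\|w\|_\cH^{2-2\beta_j}\|w\|_\cV^{2\beta_j}.
\end{equation*}
Young's inequality with exponents $1/\beta_j$ and $1/(1-\beta_j)$ absorbs $\|w\|_\cV^2$ into the left-hand side and leaves
\begin{equation*}
C(1+\|\tilde u\|_{\cV_\beta}^{\rho_j}+\|v\|_{\cV_\beta}^{\rho_j})^{1/(1-\beta_j)}\|w\|_\cH^2 .
\end{equation*}
Now $\|\tilde u\|_{\cV_\beta}\lesssim\|\tilde u\|_\cH^{2-2\beta}\|\tilde u\|_\cV^{2\beta-1}$, so the coefficient behaves like $\|\tilde u\|_\cV^{\rho_j(2\beta-1)/(1-\beta_j)}$. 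Condition $\bf(A3)$ should be exactly what makes this power at most $2$. This exponent bookkeeping is the main obstacle: if the power falls short, one must sharpen it by interpolating $\cV_{\beta_j}$ differently, or by spending part of the $\cV$-norm of $w$ rather than all of it.

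Once the power is at most $2$, together with $\tilde u,v\in\rL^\infty(0,\infty;\cH)\cap\rL^2(0,\infty;\cV)$ we obtain $\phi(t):=C(1+\|\tilde u\|_\cV^2+\|v\|_\cV^2)$. This is a locally integrable function whose non-constant part has finite total integral $M$, and its constant part $c_0$ does not depend on $\mu,\delta$.

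\emph{Conclusion.} The forcing $\omega\langle\tilde u,w\rangle$ is bounded by $\tfrac{\mu}{8}\|w\|_\cH^2+C\mu^{-1}\omega^2\|\tilde u\|_\cH^2$. This forcing also decays exponentially: by \autoref{cor: global shift} and the argument there, $\tilde u(t)$ eventually enters the small-data regime, where the semigroup of $-(A+\omega)$ decays exponentially. In any case it vanishes when $\omega=0$. Choosing $\mu_0$ with $\mu_0/4-\omega-c_0>\gamma>0$ and then $\delta_0$ accordingly, Gronwall gives
\begin{equation*}
\|w(t)\|_\cH^2\le e^{-2\gamma t+2M}\|w_0\|_\cH^2+C\int_0^t e^{-2\gamma(t-s)+2M}\|\tilde u(s)\|_\cH^2\,ds ,
\end{equation*}
which decays exponentially. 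For $\omega=0$ we have $\tilde u=u$, which gives the final claim.
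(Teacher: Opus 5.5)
Your argument follows the paper's proof: test \eqref{eq:model difference} with $w$, extract damping from $\mu\rI_\delta w=\mu w+\mu(\rI_\delta w-w)$, bound the nonlinearity by \textbf{(A2)}, interpolation and Young, control the coefficient through the global regularity of $\tilde u$ and $v$, and close with Gronwall.

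The exponent check you left open closes exactly when $\rho_j(\beta-\tfrac12)+\beta_j\le 1$. This is literally \textbf{(A3)} if $\beta_j=\beta$, which is the case in every example, and it follows from \textbf{(A3)} if $\rho_j\le 1$. Your bookkeeping is in fact sharper than the paper's. The paper first replaces $\|w\|_{\cV_{\beta_j}}$ by $\|w\|_{\cV_\beta}$ and then uses $\rL^2(0,\infty;\cV)\cap\rH^1(0,\infty;\cV^\ast)\hookrightarrow\rH^{1-\beta}(0,\infty;\cV_\beta)\hookrightarrow\rL^{\rho_j/(1-\beta)}(0,\infty;\cV_\beta)$, so it needs the stronger $\rho_j(\beta-\tfrac12)+\beta\le 1$. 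For $\beta_j<\beta$ and $\rho_j>1$, neither argument is covered by \textbf{(A3)} as written.

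One detail: for $c_0$ to be genuinely independent of $\mu,\delta$, let the constant part come only from the summand $1$ in \textbf{(A2)}. To do this, bound $\|v\|_{\cV_\beta}^{p}\le 1+\|v\|_{\cV_\beta}^{2/(2\beta-1)}$ before interpolating. Then $\sup_t\|v(t)\|_\cH$, which depends on $\mu$ and $\delta$, enters only $M$.

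The genuine gap is the case $\omega>0$. Your Duhamel form of Gronwall is the correct one. The paper's final display puts $\|\tilde u\|^2_{\rL^2_t\cV^\ast}$ in front of a decaying exponential, which is not a valid Gronwall bound when the rate is negative. But your form shows that you need $\|\tilde u(t)\|_\cH$ to decay exponentially, and this does not follow from \autoref{cor: global shift}. Smallness of $\tilde u(t_0)$ together with exponential stability of the semigroup of $-(A+\omega)$ gives no decay when $F$ has a linear part, which the summand $1$ in \textbf{(A2)} permits.

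For instance, take $\cV=\rH^1(\Omega)$, $\cH=\rL^2(\Omega)$, the weak Neumann Laplacian $A=-\Delta_N$ (so $\omega=1$), $e_1$ the normalised constant, and $F(u)=(u,e_1)e_1-|(u,e_1)|(u,e_1)e_1$. Then \textbf{(A1)}--\textbf{(A4)} hold with $\rho_1=1$ and $\beta_1=\beta=\tfrac34$. However, $x=(\tilde u,e_1)$ solves $x'=-|x|x$, so $|x|\sim 1/t$ for data with nonzero mean. Pairing \eqref{eq:model difference} with $e_1$ gives $\tfrac{d}{dt}(w,e_1)=g(t)-x(t)$, where $g$ is bounded by a multiple of $\|w\|_\cH$. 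Hence $\|w(t)\|_\cH=O(e^{-\gamma t})$ would force $\int_0^\infty |x|\d s<\infty$, which is false.

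So for $\omega>0$ you need an additional hypothesis, such as exponential decay of $\tilde u$; this holds, e.g., in the Cahn--Hilliard example by its energy inequality. For $\omega=0$ the forcing is absent, and your proof is complete modulo the exponent condition above.
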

\noindent
We want to stress that the convergence established in \autoref{thm: V norm} is strong. Specifically, embedding $\cH \hookrightarrow \cV^\ast$ naturally leads to the following weaker convergence result.
\begin{cor}[Convergence in the $\cV^\ast$-norm]\label{cor: H norm}\mbox{}\\
    Under the same assumptions of \autoref{thm: V norm}, the unique, global solution of \eqref{eq:model difference} satisfies
        \begin{equation}
            \| w(t) \|_{\cV^\ast} \to 0 \ \text{ exponentially, as } \ t \to \infty.
        \end{equation}
\end{cor}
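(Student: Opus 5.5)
This corollary follows directly from \autoref{thm: V norm} once we use the continuity of the embedding $\cH \hookrightarrow \cV^\ast$ from the Gelfand triple. No new energy estimate is needed. The only point to check is that the constants $\mu_0$, $\delta_0$ and the exponential rate carry over unchanged.

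Continuity of the embedding gives a constant $C_{\mathrm{emb}}>0$, depending only on the triple $(\cV,\cH,\cV^\ast)$, such that
\begin{equation*}
\| f \|_{\cV^\ast} \leq C_{\mathrm{emb}} \| f \|_{\cH} \quad \text{for all } f \in \cH .
\end{equation*}
It is explicit from the pairing identity: for $f \in \cH$ and $g \in \cV$ we have $\langle f, g\rangle_{\cV^\ast,\cV} = (f,g)_\cH \leq \|f\|_\cH \|g\|_\cH \leq C_P \|f\|_\cH \|g\|_\cV$. Here $C_P$ is the norm of $\cV \hookrightarrow \cH$. Taking the supremum over $\|g\|_\cV \leq 1$ gives $C_{\mathrm{emb}} = C_P$.

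Next, fix the $\mu_0$, $\delta_0$ supplied by \autoref{thm: V norm}, and take $\mu>\mu_0$ and $0<\delta<\delta_0$. That theorem yields constants $M,\gamma>0$ with $\|w(t)\|_\cH \leq M e^{-\gamma t}$ for all $t\geq 0$; this is what ``exponentially'' means there. We use that $w(t) \in \cH$ for every $t$. This holds because $w = \tilde u - v$ is the difference of two solutions in $\rL^2(0,\infty;\cV)\cap \rH^1(0,\infty;\cV^\ast)$. By the assumption $(\cV^\ast,\cV)_{\frac12,2}=\cH$, this space embeds into $\mathrm{BUC}([0,\infty);\cH)$ (the trace method). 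Hence
\begin{equation*}
\| w(t) \|_{\cV^\ast} \leq C_{\mathrm{emb}} \| w(t)\|_\cH \leq C_{\mathrm{emb}} M e^{-\gamma t}, \qquad t \geq 0,
\end{equation*}
which is exponential decay in $\cV^\ast$ with the same rate $\gamma$.

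The argument has no real obstacle. The only subtlety is that the exponential statement of \autoref{thm: V norm} must hold pointwise for every $t$, not merely almost everywhere. The $\mathrm{BUC}$ regularity noted above guarantees that $t\mapsto \|w(t)\|_\cH$ is continuous, so the pointwise estimate is meaningful and transfers to the $\cV^\ast$-norm.
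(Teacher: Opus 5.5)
Your proposal is correct and matches the paper, which justifies the corollary in one line: the exponential $\cH$-decay from \autoref{thm: V norm} is transferred through the continuous embedding $\cH \hookrightarrow \cV^\ast$. Your explicit embedding constant from the pairing identity and the $\mathrm{BUC}$ remark are harmless additions that the paper leaves implicit.
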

\begin{proof}[Proof of \autoref{thm: V norm}]
    Taking inner products of \eqref{eq:model difference} with $w$, yields 
    \begin{equation*}
        \langle \dt w, w \rangle_{\cV^\ast, \cV} + \langle A w , w \rangle_{\cV^\ast, \cV} = \langle F(\tilde u) -F(v), w \rangle_{\cV^\ast, \cV} - \mu \langle \rI_\delta w , w \rangle_{\cV^\ast, \cV}- \omega \langle \tilde{u},w \rangle_{\cV^\ast, \cV}.
    \end{equation*}
    Using the coercivity of $A$ in $\bf(A1)$, the assumptions on the pairing between $\cV^\ast$ and $\cV$ and adding the term $\mu \langle w-w,w \rangle_{\cV^\ast,\cV}$ on the right-hand side, yields 
    \begin{equation*}
        \frac{1}{2}\dt \| w \|^2_\cH + \alpha \| w \|^2_\cV \leq \langle F(u) -F(v), w \rangle_{\cV^\ast, \cV} + \mu  \langle w-\rI_\delta w , w \rangle_{\cV^\ast, \cV} - \mu \| w \|_\cH^2 - \omega \langle \tilde{u},w \rangle_{\cV^\ast, \cV}. 
    \end{equation*}
    First, we estimate using Young's inequality to obtain
    \begin{equation*}
        | - \omega \langle \tilde{u},w \rangle_{\cV^\ast, \cV} | \leq \omega \| \tilde{u} \|_{\cV^\ast} \cdot \| w \|_\cV \leq C \omega \| \tilde{u} \|^2_{\cV^\ast} + \eps \| w\|^2_\cV.
    \end{equation*}
    Next, by $\bf(A3)$, embedding $\cV_\beta \hookrightarrow \cV_{\beta_j}$ and Young's inequality we estimate 
    \begin{equation*}
        \begin{aligned}
            |\langle F(\tilde{u}) -F(v), w \rangle_{\cV^\ast, \cV} | \leq \| F(\tilde{u}) -F(v) \|_{\cV^\ast} \cdot \| w \|_\cV &\leq  C \sum_{j=1}^k \bigl ( 1+ \| \tilde{u} \|^{\rho_j}_{\cV_{\beta}} + \| v \|^{\rho_j}_{\cV_{\beta}} \bigr ) \| w \|_{\cV_{\beta_j}} \cdot \| w \|_\cV \\ &\leq C(\eps) \sum_{j=1}^k \bigl ( 1+ \| \tilde{u} \|^{2 \rho_j}_{\cV_{\beta}} + \| v \|^{ 2 \rho_j}_{\cV_{\beta}} \bigr ) \| w \|^2_{\cV_\beta} + \eps   \| w \|^2_\cV\,.
        \end{aligned}
    \end{equation*}
    Using interpolation and Young's inequalities, we obtain further
    \begin{equation*}
        \begin{aligned}
            C(\eps) \sum_{j=1}^k \bigl ( 1+ \| \tilde{u}\|^{2 \rho_j}_{\cV_{\beta}} + \| v \|^{ 2 \rho_j}_{\cV_{\beta}} \bigr ) \| w \|^2_{\cV_\beta} + \eps   \| w \|^2_\cV &\leq C(\eps) \sum_{j=1}^k \bigl ( 1+ \| \tilde{u} \|^{2 \rho_j}_{\cV_{\beta}} + \| v \|^{ 2 \rho_j}_{\cV_{\beta}} \bigr ) \| w \|^{4-4\beta}_{\cH} \| w \|^{4\beta-2}_\cV + \eps   \| w \|^2_\cV \\ &\leq C(\eps)  \sum_{j=1}^k \bigl ( 1+ \| \tilde{u} \|^{ \frac{\rho_j}{1-\beta}}_{\cV_\beta} + \| v \|^{ \frac{\rho_j}{1-\beta}}_{\cV_\beta} \bigr ) \| w \|^{2}_{\cH} +\eps \| w \|^2_\cV.
        \end{aligned}
    \end{equation*}
    Using the bound \eqref{eq: I_delta bound2}, the remaining term involving the interpolation operator can be estimated by
    \begin{equation*}
        |\mu  \langle w- \rI_\delta w , w \rangle_{\cV^\ast, \cV} | \leq  C\mu \delta \| w \|_\cH \| w \|_\cV \leq C(\eps) \mu^2 \delta^2 \| w \|^2_\cH + \eps \| w \|_\cV^2.
    \end{equation*}
    Absorbing the highest order norms, we conclude the inequality 
    \begin{equation*}
          \frac{1}{2}\dt \| w \|^2_\cH + \alpha \| w \|^2_\cV \leq
          C \bigl (  \sum_{j=1}^k \bigl ( 1+ \| \tilde{u} \|^{ \frac{\rho_j}{1-\beta}}_{\cV_\beta} + \| v \|^{ \frac{\rho_j}{1-\beta}}_{\cV_\beta} \bigr ) + \mu^2\delta^2  - \mu \bigr ) \| w \|^2_\cH + C\omega \| \tilde{u}\|^2_{\cV^\ast}
    \end{equation*}
    for a suitable constant $C>0$. Note that $\tilde{u}$ and $v$ are the given global solutions of \eqref{eq:model shift} and \eqref{eq:model data} respectively. In particular, by $\bf(A3)$ we have
    \begin{equation*}
        \rL^2(0,\infty;\cV) \cap \rH^1(0,\infty;\cV^\ast) \hookrightarrow \rH^{1-\beta}(0,\infty;\cV_\beta ) \hookrightarrow \rL^{\frac{\rho_j}{1-\beta}}(0,\infty;\cV_\beta)
    \end{equation*}
    and therefore there exist time-independent constants $C_1$, $C_2>0$ such that 
    \begin{equation*}
         \int_0^t \sum_{j=1}^k \bigl ( 1+ \| \tilde{u} \|^{ \frac{\rho_j}{1-\beta}}_{\cV_\beta} + \| v \|^{ \frac{\rho_j}{1-\beta}}_{\cV_\beta}\bigr ) \d s \leq C_1 t + C_2.
    \end{equation*}
    Hence, integrating in time, and using Gronwall's inequality yields 
    \begin{equation*}
        \frac{1}{2} \| w (t) \|_\cH^2 + \alpha \int_0^t \| w(s) \|^2_\cV \d s \leq \mathrm{e}^{C_2} \bigl ( \frac{1}{2}   \| w_0 \|^2_\cH + C\omega \|\tilde{u} \|^2_{\rL^2_t \cV^\ast} \bigr ) \cdot \mathrm{e}^{C_1(1 + \mu^2\delta^2 - \mu)t} \ \text{ for all } \ t>0.
    \end{equation*}
    The desired exponential convergence follows from choosing $\mu$, $\delta>0$ such that $1 + \mu^2\delta^2 - \mu<0$ and using $\|\tilde{u} \|^2_{\rL^2_t \cV^\ast} \leq C$ for a constant $C>0$ independent of time.
\end{proof}

\section{Illustrations of the General Framework - strong solutions}\label{sec: Example}
\noindent
As previously noted, the general framework we develop incorporates a range of models from mathematical physics, which will be discussed in detail below. We begin by presenting the general structure governing the convergence of strong solutions of the approximate system \eqref{eq:model data} to those of the original system \eqref{eq:model}, or the shifted system \eqref{eq:model shift}, depending on the value of $\omega $ from $\bf(A1)$. Let $\Omega \subset \R^n$ be an open set. Set $\cV^\ast = \rL^2(\Omega)$, $\cV= \D(A)$ and $\cH = (\rL^2(\Omega), \D(A))_{\frac{1}{2},2}$. Define the pairing between $\rL^2(\Omega)$ and $\D(A)$ by
\begin{equation}\label{eq: strong pairing}
    \langle u , v \rangle_{\rL^2(\Omega), \D(A)} \coloneqq (u,Av)_2 + (u,v)_2 \ \text{ for all } \ u \in \rL^2(\Omega) \ \text{ and} \ v \in \D(A).
\end{equation}
Here $(\cdot, \cdot)_2$ denotes the standard $\rL^2$-inner product. Concerning the interpolation operator $\mathrm{I}_\delta$, we make the standard assumption
\begin{equation}\label{eq: strong interpolation}
    \| f - \mathrm{I}_\delta f \|_2 \leq C \delta \| f \|_\cH \ \text{ for all }\ f \in \cH,
\end{equation}
see also \cite{MR3183055}. This assumption will be used throughout all examples in the strong setting. A straightforward calculation shows that this implies the bound \eqref{eq: I_delta bound2} on the measurement operator $\mathrm{I}_\delta$. Note that in contrast to \cite{MR3917673,MR3183055} we do not require any additional assumptions on the observations. We are now in a position to discuss several examples drawn from the existing literature, as well as introduce new ones that, to the best of our knowledge, have not been previously treated, thereby presenting the first data assimilation results for these cases.
\subsection{2D-Navier-Stokes equations}\label{subsec: stokes strong}\cite{MR3183055} \mbox{} \\ 
Let $\Omega \subset \R^2$ be a bounded domain with smooth boundary. The 2D-Navier-Stokes equations with no-slip boundary conditions are given by the following set of equations
   \begin{equation}\label{eq:2D Stokes}\tag{\textcolor{blue}{2D-NSE}}
    \left\{
    \begin{aligned}
        \partial_t u + (u\cdot \nabla)u - \Delta u  + \nabla p&=0 ,  &&\text{ in }(0,T) \times \Omega, \\
        \div u &= 0, &&\text{ in }(0,T) \times \Omega, \\
        u&=0,  &&\text{ in }  (0,T) \times  \partial\Omega , \\
        u(0) &=u_0,
    \end{aligned}
    \right.
\end{equation}
where $u \colon \Omega \to \R^2$ denotes the velocity field and $p \colon \Omega \to \R$ the pressure. To rewrite \eqref{eq:2D Stokes} in the form of an abstract evolution equation \eqref{eq:model}, we introduce the Stokes operator $A$ realized in the space of weakly divergence-free vector fields $\rL_\sigma^2(\Omega)$ by
\begin{equation*}
    Au \coloneqq -\mathbb{P}\Delta u, \enspace \D(A) \coloneqq  \rH^2(\Omega;\R^2) \cap \rH^1_0(\Omega;\R^2)\cap \rL^2_\sigma(\Omega;\R^2).
\end{equation*}
Here, $\mathbb{P}$ denotes the two-dimensional Helmholtz projector from $\rL^2(\Omega;\R^2)$ onto the solenoidal vector fields $\rL^2_\sigma(\Omega;\R^2)$. Then \eqref{eq:2D Stokes} can be rewritten as an abstract evolution equation \eqref{eq:model}, where $A$ is the Stokes operator and $F(u) = - (u\cdot \nabla) u$.
In the following, we verify that $\bf(A1)-\bf(A3)$ and $\bf(A4)$ are satisfied. First, we obtain in view of the orthogonality of $\mathbb{P}$ and Poincar\'e's inequality
\begin{equation*}
    \langle Au,u\rangle_{\rL^2(\Omega), \D(A)} = \| A u\|_2^2 + \| \nabla u\|^2_2 \geq \alpha \| u \|^2_{\D(A)} \ \text{ for some }\ \alpha>0 \ \text{ and all }\ u \in \D(A). 
\end{equation*}
In particular, $\bf(A1)$ is satisfied with $\omega=0$. Next, observe that the non-linear term $F$ is bilinear and therefore it suffices to verify the condition in \autoref{rem: bilinear}. Indeed, by H\"older's inequality and Sobolev embedding, we obtain
\begin{equation*}
    \| (u\cdot \nabla)u \|_2 \leq C \| u\|_4 \cdot \| \nabla u \|_4 \leq C \| u \|^2_{\rH^{\nicefrac{3}{2}}(\Omega)}.
\end{equation*}
We conclude that $\bf(A2)$, $\bf(A3)$ are satisfied with $\beta=\frac{3}{4}$ and $\rho=1$. To verify $\bf(A4)$, we note that the boundedness of the non-linear term 
is well known by the fundamental work of Ladyzhenskaya \cite{Lady59}, for all $u_0 \in \rH^1_0(\Omega;\R^2)\cap \rL^2_\sigma(\Omega;\R^2)$. For the square-integrability of the $\rH^1$-norm of $u$, we refer also to Prodi \cite[Lemma 3]{Pr:60}, where he proved the validity of the energy equality. 
\par
We denote by $v$ the approximate solution of \eqref{eq:model data}, guaranteed by \autoref{prop: global DA}. An application of \autoref{thm: V norm} then yields the following result.
\begin{cor}[Data Assimilation for \eqref{eq:2D Stokes}] \mbox{} \\
Let $u_0$, $v_0 \in \rH^1_0(\Omega;\R^2)\cap \rL^2_\sigma(\Omega;\R^2)$. Then, there exist $\mu_0$, $\delta_0>0$ such that for all $\mu>\mu_0$ and $0<\delta < \delta_0$ 
\begin{equation*}
    \| (u -v)(t) \|_{\rH^1(\Omega)}  \to 0  \ \text{ exponentially, as } \ t \to \infty.
\end{equation*}
\end{cor}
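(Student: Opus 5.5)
The plan is to obtain the corollary as a direct application of \autoref{thm: V norm} in the strong setting fixed at the beginning of \autoref{sec: Example}. We take $\cV^\ast=\rL^2_\sigma(\Omega;\R^2)$ and $\cV=\D(A)$ with $A$ the Stokes operator. Then $\cH=(\rL^2_\sigma,\D(A))_{\frac12,2}=\D(A^{1/2})=\rH^1_0(\Omega;\R^2)\cap\rL^2_\sigma(\Omega;\R^2)$, which follows from the self-adjointness and positivity of the Stokes operator. On $\cH$ the norm $\|\cdot\|_\cH$ is equivalent to $\|\nabla\cdot\|_2$, and hence to the $\rH^1$-norm by Poincar\'e's inequality. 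Once the theorem applies, the conclusion $\|w(t)\|_\cH\to 0$ exponentially is exactly the claimed $\rH^1$-convergence.

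The hypotheses are checked in order. (A1) holds with $\omega=0$ by the computation above. (A2) and (A3) hold through \autoref{rem: bilinear}, using $\beta=\frac34$ and $\rho=1$. The bilinear estimate $\|(u\cdot\nabla)u\|_2\le C\|u\|^2_{\rH^{3/2}}$ is polarized to $\|(u_1\cdot\nabla)u_1-(u_2\cdot\nabla)u_2\|_2\le C(\|u_1\|_{\cV_{3/4}}+\|u_2\|_{\cV_{3/4}})\|u_1-u_2\|_{\cV_{3/4}}$, where $\cV_{3/4}=[\rL^2_\sigma,\D(A)]_{3/4}\hookrightarrow\rH^{3/2}$. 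This gives $\rho_1(\beta_1-\frac12)+\beta=\frac14+\frac34=1$. The observation bound \eqref{eq: I_delta bound2} follows from \eqref{eq: strong interpolation} through the pairing \eqref{eq: strong pairing}: we have $|(f-\rI_\delta f,Ag)_2+(f-\rI_\delta f,g)_2|\le C\delta\|f\|_\cH\|g\|_{\D(A)}$.

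The substantive step is (A4) for $u$ (and, by the remark after \autoref{prop: global DA}, for $v$). Since $\omega=0$, $\tilde u=u$. For $u_0\in\rH^1_0\cap\rL^2_\sigma$ we invoke Ladyzhenskaya's global strong solution theory. Testing with $Au$ and using the 2D Ladyzhenskaya inequality gives $\frac{d}{dt}\|\nabla u\|_2^2+\|Au\|_2^2\le C\|u\|_2^2\|\nabla u\|_2^2\,\|\nabla u\|_2^2$. The energy equality supplies $\|u\|_\infty$ bounds in $\rL^2$ and $\nabla u\in\rL^2(0,\infty;\rL^2)$, so Gronwall yields $\nabla u\in\rL^\infty(0,\infty;\rL^2)$ and $Au\in\rL^2(0,\infty;\rL^2)$. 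Consequently $\|F(u)\|_{\rL^2(0,t;\rL^2)}\le C\|u\|_{\rL^\infty\rH^1}\|u\|_{\rL^2\rH^2}$ stays bounded. Moreover $\|u(t)\|_\cH^2\simeq\|\nabla u(t)\|_2^2$ is integrable on $(0,\infty)$ by Prodi's energy equality.

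With (A1)--(A4) in hand, \autoref{prop: global DA} produces the global $v$ and \autoref{thm: V norm} gives the exponential decay. The main obstacle I expect is not in the abstract step but in making (A4) fully rigorous, namely the uniform-in-time $\rH^1$ and $\rL^2\rH^2$ bounds, and in confirming the identification $\cH=\rH^1_0\cap\rL^2_\sigma$. For the latter I would first try citing the standard characterization of $\D(A^{1/2})$ for the Stokes operator on smooth bounded domains.
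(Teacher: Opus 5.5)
Your proposal is correct and follows essentially the same route as the paper. You verify (A1) with $\omega=0$, obtain (A2)--(A3) from the bilinear remark with $\beta=\frac34$ and $\rho=1$, get (A4) from Ladyzhenskaya's global strong solutions together with Prodi's energy equality, and then apply \autoref{thm: V norm} with $\cH=\rH^1_0\cap\rL^2_\sigma$; your explicit enstrophy/Gronwall estimate and the identification $\cH=\D(A^{1/2})$ just spell out details the paper leaves to the cited references.
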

\begin{rem}
    {\rm In this subsection, we consider \eqref{eq:2D Stokes} with \emph{no-slip} boundary conditions. The analysis can also be extended to other types of boundary conditions, such as \emph{pure slip, outflow}, or \emph{free} conditions by suitably adapting the domain of the Stokes operator; see, for example, \cite[Section 7]{MR3524106}. It is important to note, however, that in these alternative settings, $\bf(A1)$ may only be valid under the additional condition that $\omega >0$.}
\end{rem}
\subsection{3D-Primitive Equations}\label{subse: primitive}\cite{Furukawa2024,MR3917673} \mbox{}\\
\noindent
Let $\Omega = \T^2 \times (0,1)\subset \R^3$. Here, $\T^2$ denotes the unit square in $\R^2$ with periodic boundary conditions. The 3D-Primitive equations are given by the following set of equations
\begin{equation}\label{eq:3D Primitive}\tag{\textcolor{blue}{3D-PE}}
    \left\{
    \begin{aligned}
        \partial_t v + (v \cdot \nablaH)v + w \cdot \dz v - \Delta v  + \nablaH p&=0 ,  &&\text{ in }(0,T) \times \Omega, \\
        \dz p &= 0, &&\text{ in }(0,T) \times \Omega, \\
        \div u &= 0, &&\text{ in }(0,T) \times \Omega, \\
        v(0) &=v_0.
    \end{aligned}
    \right.
\end{equation}
supplemented by the boundary conditions
\begin{equation}\label{eq:bcPE}
\begin{aligned} 
    v|_{\T^2 \times \{1 \}} = 0, \quad (\partial_z v)|_{\T^2 \times \{0 \}} = 0 \ \text{ and }\  w|_{\T^2 \times \{ 0\} \cup \T^2 \times \{ 1 \}} = 0.
\end{aligned} 
\end{equation}
Here $u=(v,w) \colon \Omega \to \R^3$ denotes the velocity field, $p \colon \Omega \to \R$ the pressure and we use $\nablaH$, $\divH$ for the horizontal gradient and the horizontal divergence, that is $\nablaH:=(\partial_x,\partial_y)^T$ and
$\divH:=\nablaH \cdot{}$. Note that the vertical velocity $w$ is fully determined by the divergence free condition and the boundary conditions, that is,
\begin{equation*}
    w = -\int_0^z \divH v(\cdot,\xi) \d \xi.
\end{equation*}
To rewrite \eqref{eq:3D Primitive} in the form of an abstract evolution equation, we introduce the \emph{hydrostatic Stokes operator} $A_\rH$ realized in the space of \emph{hydrostatically solenoidal vector fields} 
\begin{equation*}
    \rL^2_{\sigmabar}(\Omega) = \overline{\{v \in \rC^\infty(\overline{\Omega};\R^2) \colon \divH \vbar = 0 \}}^{\| \cdot \|_{\rL^2(\Omega)}} \ \text{ with } \ \vbar = \int_0^1 v(\cdot,\xi) \d \xi
\end{equation*}
as follows
\begin{equation*}
    A_\rH v \coloneqq -\mathbb{P}_\rH \Delta v, \enspace \D(A_\rH) \coloneqq \{ v \in \rH^2(\Omega;\R^2) \cap \rL^2_{\sigmabar}(\Omega;\R^2) \colon v|_{\T^2 \times \{1 \}} = (\partial_z v)|_{\T^2 \times \{0 \}} = 0\}.
\end{equation*}
Here, $\mathbb{P}_\rH$ denotes the \emph{hydrostatic Helmholtz projection}. For an extensive discussion of the hydrostatic Stokes operator and related results we refer to \cite{HK:16}. Hence we write \eqref{eq:3D Primitive} in the form of an abstract evolution equation \eqref{eq:model} by choosing $A$ to be the hydrostatic Stokes operator and setting $F(v) = -(v\cdot \nablaH)v - w \cdot \dz v$. By orthogonality of $\mathbb{P}_\rH$ and Poincar\'e's inequality, we verify
\begin{equation*}
     \langle A_\rH v,v\rangle_{\rL^2(\Omega), \D(A_\rH)} = \| A_\rH v\|_2^2 + \| \nabla v\|^2_2 \geq \alpha \| v \|^2_{\D(A_\rH)} \ \text{ for some }\ \alpha>0 \ \text{ and all }\ v \in \D(A_\rH), 
\end{equation*}
which implies that $\bf(A1)$ is satisfied with $\omega=0$. Moreover, arguing as in \cite[Section 5]{HK:16}, we see that $\bf(A2)$ and $\bf(A3)$ are satisfied with $\beta=\frac{3}{4}$ and $\rho=1$.
Finally, $\bf(A4)$ is satisfied by \cite[Section 6]{HK:16} for all $v_0 \in \rH^1(\Omega;\R^2) \cap \rL^2_{\sigmabar}(\Omega;\R^2)$ subject to suitable compatibility conditions, that are specified in \autoref{cor: DA Prim}. Denoting by $\hat{v}$ the solution of the approximate system \eqref{eq:model data}, guaranteed by \autoref{prop: global DA}, an application of \autoref{thm: V norm} yields the following result.
\begin{cor}[Data assimilation of \eqref{eq:3D Primitive}]\label{cor: DA Prim} \mbox{} \\
Let $v_0$, $\hat{v}_0 \in \rH^1(\Omega;\R^2) \cap \rL^2_{\sigmabar}(\Omega;\R^2)$ satisfying the compatibility conditions $v|_{\T^2 \times \{1\}} = \hat{v}|_{\T^2 \times \{1\}} =0$. Then, there exist $\mu_0$, $\delta_0>0$ such that for all $\mu>\mu_0$ and $0<\delta < \delta_0$ 
\begin{equation*}
    \| (v -\hat{v})(t) \|_{\rH^1(\Omega)} \to 0  \ \text{ exponentially, as } \ t \to \infty.
\end{equation*} 
\end{cor}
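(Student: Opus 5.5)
The plan is to deduce the corollary directly from \autoref{thm: V norm}, applied in the strong setting of \autoref{sec: Example} with $\cV^\ast = \rL^2_{\sigmabar}(\Omega)$, $\cV = \D(A_\rH)$ and $\cH = (\rL^2_{\sigmabar}(\Omega), \D(A_\rH))_{\frac12,2}$. The first step is to identify $\cH$ concretely. We use the known characterization of the domains of fractional powers of the hydrostatic Stokes operator from \cite{HK:16}: it gives
\[
\cH = \{ v \in \rH^1(\Omega;\R^2) \cap \rL^2_{\sigmabar}(\Omega;\R^2) \colon v|_{\T^2 \times \{1\}} = 0 \},
\]
with the $\rH^1$-norm equivalent to $\|\cdot\|_\cH$. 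The Neumann condition at $z=0$ does not appear at this regularity level. Consequently the hypotheses on $v_0$ and $\hat v_0$ say exactly that $v_0, \hat v_0 \in \cH$, and convergence in $\cH$ is convergence in $\rH^1(\Omega)$.

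The second step is to collect the structural assumptions, as already outlined before the statement.
\begin{itemize}
\item[(A1)] holds with $\omega = 0$. This follows from the identity $\langle A_\rH v, v\rangle = \|A_\rH v\|_2^2 + \|\nabla v\|_2^2$ together with the fact that the graph norm of $A_\rH$ is equivalent to the $\rH^2$-norm, using maximal regularity of the hydrostatic Stokes resolvent \cite{HK:16}.
\item[(A2)] and (A3) hold with $\beta = \beta_1 = \frac34$ and $\rho_1 = 1$, by \autoref{rem: bilinear}. The point to check is the bilinear bound $\|(v\cdot\nablaH)v + w\,\dz v\|_2 \le C\|v\|_{\rH^{3/2}}^2$ on $\cV_{3/4} \simeq \rH^{3/2}$. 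The delicate term is $w\,\dz v$, since $w$ loses a horizontal derivative. We handle it as in \cite[Section 5]{HK:16}: estimate $w$ in $\rL^\infty_z \rL^4_{xy}$ through $\|\divH v\|_{\rL^2_z\rL^4_{xy}}$, and estimate $\dz v$ in $\rL^2_z \rL^4_{xy}$, using anisotropic H\"older and 2D Sobolev embeddings. Bilinearity then turns the product bound into the difference estimate (A2), and (A3) reads $\frac14 + \frac34 = 1$.
\item[(A4)] follows from the global strong well-posedness theory of \cite[Section 6]{HK:16} for $\rH^1$ data. That theory bounds $\|v\|_{\rL^2(0,t;\rH^2)\cap \rH^1(0,t;\rL^2)}$ and hence also $\|F(v)\|_{\rL^2(0,t;\rL^2)}$, uniformly as $t$ approaches any finite time. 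For integrability on $(0,\infty)$ of $\|v(t)\|_{\rH^1}^2$, we use the energy inequality $\frac12\|v(t)\|_2^2 + \int_0^t \|\nabla v\|_2^2 \le \frac12\|v_0\|_2^2$. This holds because the nonlinearity is orthogonal to $v$ in $\rL^2$, by $\divH v + \dz w = 0$ and the boundary conditions.
\end{itemize}
By the remark after \autoref{prop: global DA}, (A4) for the nudged system follows in the same way.

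Finally, we invoke \autoref{cor: global shift} and \autoref{prop: global DA} to obtain global solutions $v$ and $\hat v$. Because $\omega = 0$, the shifted problem coincides with the original one, so $\tilde v = v$. \autoref{thm: V norm} then gives $\|(v - \hat v)(t)\|_\cH \to 0$ exponentially for $\mu > \mu_0$ and $\delta < \delta_0$, and the norm equivalence from the first step converts this into exponential decay in $\rH^1(\Omega)$.

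I expect the main obstacle to be the verification of (A2) for the vertical advection term $w\,\dz v$ at regularity exactly $\rH^{3/2}$, where the estimates are borderline. If the direct anisotropic estimate fails at that endpoint, I would fall back on a pair $(\beta, \rho)$ with $\beta$ slightly above $\frac34$ and an intermediate $\beta_1 < \beta$ so that (A3) still holds. Alternatively, I would cite the bilinear estimate in \cite{HK:16} directly.
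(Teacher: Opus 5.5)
Your proposal is correct and follows the paper's own route. You verify \textbf{(A1)} with $\omega=0$, \textbf{(A2)}--\textbf{(A3)} with $\beta=\frac34$, $\rho=1$ via the bilinear estimate of \cite[Section 5]{HK:16}, and \textbf{(A4)} via the global strong well-posedness of \cite[Section 6]{HK:16}. You then apply \autoref{thm: V norm}, with $\cH$ identified as the $\rH^1$ space of hydrostatically solenoidal fields vanishing at $z=1$. Your additional details simply spell out what the paper delegates to \cite{HK:16}: the explicit identification of $\cH$, the energy inequality giving integrability of $\|v\|_{\rH^1}^2$ on $(0,\infty)$, and the anisotropic bound for $w\,\dz v$.
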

\subsection{Energy Balance Model coupled to an active fluid}\label{subsec:EBM}\cite{DHT:25, DHPT:25} \mbox{}\\ 
\noindent
Energy Balance Models (EBMs) are conceptual climate models that describe the evolution of the Earth's temperature based on the fundamental principle of radiative balance. They are instrumental for investigating core climate dynamics, such as bistability and ice-albedo feedback, by capturing essential physics without the computational complexity of general circulation models (\cite{Budyko1969,Sellers1969,North17}).

Let $\Omega = \T^2 \times (0,1) \subset \R^3$ be as in \autoref{subse: primitive}. We consider the following system of equations, introduced in \cite{DHT:25, DHPT:25} 
\begin{equation}
\left\{
\begin{aligned}
\partial_t v + (v \cdot \nablaH) v + w \cdot \dz v - \Delta v + \nablaH p &= \int_0^z \nablaH \tau(\cdot,\xi) \d \xi,  &&\quad \text{in }  (0,T) \times \Omega   ,\\
\partial_z p &= 0,  &&\quad \text{in } (0,T) \times \Omega ,\\
\div u &= 0, &&\quad \text{in } (0,T) \times \Omega ,\\
\partial_t \tau+ (v \cdot \nablaH) \tau + w \cdot \dz \tau - \Delta \tau &= 0,  &&\quad \text{in } (0,T) \times  \Omega ,\\
\tau|_{\T^2 \times \{1\}} &= \rho,  &&\quad \text{in } (0,T) \times \T^2,\\
\partial_t \rho + (\vbar \cdot \nablaH) \rho - \DeltaH \rho + (\partial_z \tau)|_{\T^2 \times \{1\}} &= Q(t,x)\,\beta(\rho) - |\rho|^3 \rho,  &&\quad \text{in } (0,T) \times \T^2,\\
v(0) &= v_0, \qquad \tau(0)= \tau_0,
\end{aligned}
\right.
\label{eq: primitive + EBM}\tag{EBM-PE}
\end{equation}
where $v_0$ and $\tau_0$ are the initial conditions, supplemented by the boundary conditions
\begin{equation}\label{eq:bcEBM}
\begin{aligned} 
      v|_{\T^2 \times \{1 \}} = 0, \quad (\partial_z v)|_{\T^2 \times \{0 \}} = 0, \quad   w|_{\T^2 \times \{ 0\} \cup \T^2 \times \{ 1 \}} = 0 \ \text{ and } \ \tau|_{\T^2 \times \{0\}}=0.
\end{aligned} 
\end{equation}
Here, $v \colon \Omega \to \R^2$ denotes the velocity field, $p\colon \Omega \to \R$ denotes the pressure, $\tau \colon \Omega \to \R$ denotes the temperature and $\rho = \tau|_{\T^2 \times \{1\}} \colon \T^2 \to \R$ denotes the temperature evaluated at the surface. Moreover, $Q \in \rC^1_b(\R_+ \times \T^2)$ represents the positive solar radiation and $\beta$ is the Lipschitz continuous co-albedo, hence resulting in a Sellers-type EBM, which is parametrised by
\begin{equation*}
\beta(\rho) = \beta_1 + (\beta_2 - \beta_1)\, \frac{1+\tanh(\rho-\rho_{\mathrm{ref}})}{2},
\end{equation*}
with \(0<\beta_1<\beta_2\) corresponding to the co-albedo values for ice-covered and ice-free conditions, respectively, and \(\rho_{\mathrm{ref}}\) being the temperature at which ice becomes white. For more details we refer to \cite{DHT:25}.
To rewrite \eqref{eq: primitive + EBM} as the abstract evolution equation \eqref{eq:model}, we introduce the operator matrix $A$ realized in the space $\cV^\ast = \rL^2_{\sigmabar}(\Omega;\R^2) \times \rL^2(\Omega) \times \rL^2(\T^2)$ by
\begin{equation*}
    A \coloneqq \begin{pmatrix}
        A_\rH & 0 & 0 \\ 
        0 & -\Delta & 0 \\ 
        0 & \gamma \dz & -\DeltaH \\
    \end{pmatrix}, \enspace \D(A) = \D(A_\rH) \times   \{ (\tau,\rho) \in \rH^{2}(\Omega) \times \rH^{2}(\T^2) \ \colon \tau|_{\T^2 \times \{1\}}= \rho, \ \tau|_{\T^2 \times \{0\}} =0  \},
\end{equation*}
where $A_\rH$ denotes the hydrostatic Stokes operator as introduced in \autoref{subse: primitive} and $\gamma$ denotes the trace operator. Note that with this choice of boundary conditions, the operator $A$ is invertible  due to its lower-triangular structure. Next, set
\begin{equation*}
    F\bigl ((v,\tau,\rho)^\top \bigr ) = \begin{pmatrix}
        - (v \cdot \nablaH v) - w \cdot \dz v +\int_0^z \nablaH \tau(\cdot,\xi) \d \xi \\
        - (v \cdot \nablaH \tau) - w \cdot \dz \tau \\
        -  (\vbar \cdot \nablaH) \rho + Q(t,x) \beta(\rho) - |\rho|^3 \rho.
    \end{pmatrix}
\end{equation*}
In view of integration by parts and the invertibility of $A$, we calculate for all $\mathbf{x}= (v,\tau,\rho) \in \D(A)$
\begin{equation*}
    \begin{aligned}
         \langle A\mathbf{x}, \mathbf{x} \rangle_{\cV^\ast, \D(A)} = \| A \mathbf{x} \|_{\cV^\ast}^2
 + \| \nabla v \|_2^2 + \| \nabla \tau \|^2_2 + \| \nablaH \rho\|^2_2 \geq \alpha \| \mathbf{x} \|^2_{\D(A)}  \ \text{ for some }\ \alpha>0. 
    \end{aligned}
\end{equation*}
Specifically, $\bf(A1)$ is valid with $\omega=0$. Moreover, \cite[Lemma 5.5]{DHT:25} implies that $\bf(A2)$ and $\bf(A3)$ are satisfied. Finally, $\bf(A4)$ is valid by \cite[Theorem 4.1]{DHT:25} for all initial data $\mathbf{x}_0 = (v_0,\tau_0,\rho_0)$ satisfying
\begin{equation}\label{eq: ass data EBM}
    v_0 \text{ as in \autoref{cor: DA Prim}} \ \text{ and }\ \tau_0 \in \rH^1(\Omega)  \ \text{ such that }\ \rho_0 = \tau_0 |_{\T^2 \times \{1\}}\in \rH^1(\T^2).
\end{equation}
To verify that the $\rH^1$-norm of $\mathbf{x}(t)$ is square-integrable on $(0,\infty)$, we first take the inner product of \eqref{eq: primitive + EBM}$_4$ with $\tau$ and use H\"older's and Young's inequality to obtain 
\begin{equation*}
    \frac{1}{2}\dt \bigl ( \| \tau \|^2_2 + \| \rho \|^2_2 \bigl ) + \| \nabla \tau \|^2_2 + \| \nablaH \rho \|^2_2 + \| \rho\|^5_5 = \int_{\T^2} Q(t,x) \beta(\rho) \cdot \rho \leq C \| Q(t,x)  \|_{\nicefrac{5}{4}}^{\nicefrac{5}{4}} + \eps \| \rho \|^5_5.
\end{equation*}
Integrating in time, using Gronwall's inequality and assuming $Q(t,x) \in \rL^{\nicefrac{5}{4}}(\R_+ \times \T^2)$ yields the result $\| \tau \|^2_{\rH^1}$ and $\| \rho \|^2_{\rH^1}$ are integrable on $(0,\infty)$. Moreover, taking the inner product of \eqref{eq: primitive + EBM}$_1$ with $v$ and arguing similarly implies that $\| v \|^2_{\rH^1}$ is integrable on $(0,\infty)$. Denoting by $\hat{\mathbf{x}} = ( \hat{v}, \hat{\tau}, \hat{\rho})$ the solution of the approximate system \eqref{eq:model data}, \autoref{thm: V norm} implies the following convergence result.
\begin{cor}[Data assimilation for \eqref{eq: primitive + EBM}] \mbox{}\\
Let $\mathbf{x}_0$ and $\hat{\mathbf{x}}_0$ satisfy \eqref{eq: ass data EBM}. Then, there exist $\mu_0$, $\delta_0>0$ such that for all $\mu>\mu_0$ and $0<\delta < \delta_0$ 
\begin{equation*}
    \| (\mathbf{x} -\hat{\mathbf{x}})(t) \|_{\rH^1(\Omega) \times \rH^1(\Omega) \times \rH^1(\T^2)}  \to 0  \ \text{ exponentially, as } \ t \to \infty.
\end{equation*} 
\end{cor}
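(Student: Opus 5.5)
The corollary is a direct application of \autoref{thm: V norm} in the strong framework: $\cV^\ast = \rL^2_{\sigmabar}(\Omega;\R^2)\times\rL^2(\Omega)\times\rL^2(\T^2)$, $\cV=\D(A)$, and $\cH=(\cV^\ast,\D(A))_{\frac12,2}$. I would first identify $\cH$ with the closed subspace of $\rH^1(\Omega;\R^2)\times\rH^1(\Omega)\times\rH^1(\T^2)$ cut out by the boundary and compatibility conditions that survive real interpolation of order $\frac12$. These are $v|_{\T^2\times\{1\}}=0$, $\tau|_{\T^2\times\{0\}}=0$ and $\tau|_{\T^2\times\{1\}}=\rho$; the Neumann condition $(\dz v)|_{\T^2\times\{0\}}=0$ disappears. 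This identification follows from the known characterisation for the hydrostatic Stokes operator in \cite{HK:16}, together with standard interpolation of Dirichlet-type domains for the lower-triangular heat part. Its useful consequence is that $\|\cdot\|_\cH$ is equivalent to the product $\rH^1$ norm, and that initial data satisfying \eqref{eq: ass data EBM} lie in $\cH$.

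Next I would check the hypotheses of \autoref{thm: V norm} in order. \textbf{(A1)} holds with $\omega=0$ by the computation displayed before the statement. The same computation gives $\alpha>0$, because $A$ is invertible by its lower-triangular structure and $\|A\mathbf{x}\|_{\cV^\ast}$ controls the graph norm. \textbf{(A2)} and \textbf{(A3)} come from \cite[Lemma 5.5]{DHT:25}. I would still note that the nonlinearity there is not purely bilinear: besides the bilinear hydrostatic transport terms, with $\beta=\frac34$ and $\rho_j=1$ as in \autoref{rem: bilinear}, it contains the Lipschitz term $Q\beta(\rho)$ with $\rho_j=0$ and the quartic term $|\rho|^3\rho$. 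For the quartic term I would use $\| |\rho_1|^3\rho_1-|\rho_2|^3\rho_2\|_2\le C(\|\rho_1\|_{\infty}^3+\|\rho_2\|_\infty^3)\|\rho_1-\rho_2\|_2$ together with $\rH^{3/2}(\T^2)\hookrightarrow\rL^\infty$. This gives a term with $\rho_j=3$ and $\beta_j$ slightly above $\frac12$, so that $3(\beta_j-\frac12)+\frac34\le1$ holds. \textbf{(A4)} has two parts. Boundedness of $\|F(\mathbf{x})\|_{\rL^2(0,t;\cV^\ast)}$ up to any finite time comes from the global strong well-posedness \cite[Theorem 4.1]{DHT:25}. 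Integrability of $\|\mathbf{x}(t)\|_\cH^2$ on $(0,\infty)$ comes from the energy estimates sketched above, using $Q\in\rL^{5/4}(\R_+\times\T^2)$, Poincaré in $z$ from $\tau|_{\T^2\times\{0\}}=0$, and the analogous velocity estimate. Here the buoyancy term $\int_0^z\nablaH\tau$ is treated as a forcing already controlled in $\rL^2_t\rL^2_x$ after integrating by parts.

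With $\omega=0$, \autoref{cor: global shift} gives $\tilde{\mathbf{x}}=\mathbf{x}$, and \autoref{prop: global DA} gives the global solution $\hat{\mathbf{x}}$ of \eqref{eq:model data}. The observation bound \eqref{eq: strong interpolation} implies \eqref{eq: I_delta bound2}. \autoref{thm: V norm} then yields exponential decay of $\|(\mathbf{x}-\hat{\mathbf{x}})(t)\|_\cH$, which is the claimed decay in $\rH^1\times\rH^1\times\rH^1$ by the identification in the first step.

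I expect the main obstacle to be the $\rH^1$-integrability in \textbf{(A4)}, specifically for the velocity. The heat part decouples once we use the cancellation of transport against $\tau$ and the trace identity linking $(\dz\tau)|_{\T^2\times\{1\}}$ with $\rho$. For $v$, however, the energy estimate gives only $\int_0^\infty\|\nabla v\|_2^2$ plus $\sup_t\|v\|_2^2$, and the forcing $\int_0^z\nablaH\tau$ must be paired with $v$ and integrated by parts horizontally to land on $\|\tau\|_2\|\nablaH v\|_2$. I would first try closing this with Young's inequality, using $\tau\in\rL^2(0,\infty;\rL^2)$ from the temperature estimate. If $\rL^2$-integrability of $\|v\|_{\rH^1}$ alone is not enough to match $\|\cdot\|_\cH$, I would add a higher-order energy estimate from \cite{HK:16,DHT:25}.
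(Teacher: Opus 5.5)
Your proposal is correct and follows the paper's route: (A1) with $\omega=0$ from the displayed coercivity identity, (A2)--(A3) from \cite[Lemma 5.5]{DHT:25}, and (A4) from \cite[Theorem 4.1]{DHT:25} together with the $(\tau,\rho)$ and $v$ energy estimates under $Q\in\rL^{5/4}(\R_+\times\T^2)$, followed by \autoref{thm: V norm} with $\tilde{\mathbf{x}}=\mathbf{x}$. Two of your steps are details the paper leaves implicit: the identification $\cH\simeq\rH^1$, which makes your closing worry moot because $\int_0^\infty\|v\|_{\rH^1}^2\d t<\infty$ is exactly what (A4) asks, and the explicit check of $|\rho|^3\rho$ with $\rho_j=3$, $\beta_j\le\frac{7}{12}$; one caveat is that membership of the data in $\cH$ also requires $\tau_0|_{\T^2\times\{0\}}=0$, which \eqref{eq: ass data EBM} does not state explicitly.
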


\subsection{2D-Bidomain problem with FitzHugh-Nagumo transport}\label{subsec:bidomain} \mbox{}\\
\noindent The bidomain model offers a continuum framework for the electrical activity in cardiac tissue, representing it as overlapping intra and extracellular domains. Coupled with FitzHugh-Nagumo kinetics, which provide a simplified yet powerful caricature of action potential dynamics, the system is a cornerstone of computational cardiology for simulating cardiac wave propagation and investigating the mechanisms of arrhythmia. For an exhaustive overview of the model, we refer to \cite{Keener,ColliFranzone}. 

We assume that $\Omega\subset\mathbb{R}^2$ is a bounded domain with smooth boundary. The system we will focus on reads as 
\begin{equation}
\left\{
\begin{aligned}
      \partial_t u-\div(a_1\nabla u_1)&=f(u,w),  &&\text{in }(0,T) \times \Omega,\\
        \partial_t u+\div(a_2\nabla u_2)&=f(u,w), &&\text{in }(0,T) \times \Omega,\\
        \partial_tw&=g(u,w), &&\text{in }(0,T) \times \Omega , \\
        u_1-u_2&=u,&&\text{in }(0,T) \times \Omega,\\
        u(0)&=u_0, \quad w(0)=w_0.
\end{aligned}
\right.
\label{eq:bid}\tag{2D-BIDOMAIN}
\end{equation}
endowed with boundary conditions:
\begin{equation} \label{eq:BC-Bid}
a_i\nabla u_i \cdot \nu=0 \ \text{ on }\ \partial \Omega \times (0,T), \  i=1,2\,,
\end{equation}
where $\nu$ is the outer unit normal vector of the surface $\partial\Omega$. Here, $(u,w): \Omega\to\mathbb{R}^2$. Moreover, we assume that $a_i=a_i(x)$, $i=1,2$, belong to $\rW^{1,\infty}(\Omega;\mathbb{R}^{2\times2})$, are uniformly positive definite on $\Omega$ and there exists $\gamma\in \rH^{1}(\partial\Omega)$ such that $
\nu(x)\cdot a_2(x)=\gamma(x)\nu(x)\cdot a_1(x)\,.
$
In particular, we deduce that $\gamma(x)\ge\gamma_0>0$, for all $x\in\partial\Omega$. 
Concerning the non-linear right-hand sides $f$ and $g$, we assume they are transport terms of FitzHugh-Nagumo type given by
\[
\begin{aligned}
f(u,w):=-u^3+(a+1)u^2 -(a+\delta)u - w\ \text{ and } \ g(u,w):=-bw+cu \ \text{ for } \ a  \in (0,1) \ \text{ and } \ b,c,   \delta >0.
\end{aligned}
\]
To rewrite system \eqref{eq:bid} as an abstract evolution equation \eqref{eq:model}, we first introduce the bidomain operator $\mathbb{A}$, realized in $\rL_0^2(\Omega)$, the space of mean value free functions, by
\begin{equation*}
    \mathbb{A}:=\bigl ( \div(a_1 \nabla)^{-1} + \div(a_2 \nabla)^{-1} \bigr)^{-1}  , \ \D(\mathbb{A}):= \{ u \in \rH^2(\Omega) \cap \rL^2_0 (\Omega)\colon \nu \cdot a_1 \nabla u =\nu \cdot a_2 \nabla u =0  \text{ on }  \partial \Omega  \},
\end{equation*}
and extend its definition to $\rL^2(\Omega)$. We then define the operator matrix $A$, realized in the space $\cV^\ast =\rL^2(\Omega)\times\rL^2(\Omega)$, by
\begin{equation*}
A:=\begin{pmatrix}
    \varepsilon+\mathbb{A} & 0 \\\  0 &b
\end{pmatrix}, \  \D(A) = \D(\mathbb{A}) \times \rL^2(\Omega)
\end{equation*}
and set
\begin{equation*} F(u,w)=
    (-u^3+(a+1)u^2 -w-(a-\varepsilon+\delta) u, cu ).
\end{equation*}
For more details, we refer to \cite{MR3834925}. To verify $\bf(A1)$, we calculate for $\mathbf{x}= (u,v) \in \D(A)$
\begin{equation*}
    \langle A \mathbf{x}, \mathbf{x}\rangle_{\cV^\ast, \D(A)} = \| A \mathbf{x} \|^2_{\cV^\ast} + \| (\mathbb{A}+\varepsilon)^{\nicefrac{1}{2}} u \|^2_2  +b\| w \|^2_2 \geq \alpha \| \mathbf{x} \|^2_{\D(A)} \ \text{ for some } \ \alpha >0,
\end{equation*}
where we used the relation $ \| (\mathbb{A}+\varepsilon)^{\nicefrac{1}{2}} u \|^2_2 \geq C \| u \|^2_{\rH^1}$ in the last step. Next, as a polynomial of order $3$, $F$ naturally satisfies $\bf(A2)$ and $\bf(A3)$ with $\beta = \nicefrac{1}{3}$ and $\rho=2$, see also \cite{MR3834925}. In particular, \cite[Theorem~4.1]{MR3834925} guarantees the non blow-up condition in $\bf(A4)$. To verify that the $\rH^1$-norm of the solution is square-integrable on $(0,\infty)$,
we take inner products with $u$ and $c^{-1}w$ respectively and adding the resulting equations yields
\begin{equation*}
\frac{1}{2}\dt  \bigl ( \|u\|_2^2+\frac{1}{c}\|w\|_2^2 \bigr )+ \| \bigl (\mathbb{A}+\eps \bigr )^{\nicefrac{1}{2}} u \|^2_2+(a-\varepsilon+\delta) \| u \|^2_2+\frac{b}{c}\|w\|_2^2 +  \| u \|^4_4 = (a+1) \| u \|^3_3.
\end{equation*}
Using the relation $\| \bigl (\mathbb{A}+\eps\bigr )^{\nicefrac{1}{2}} u \|^2_2 \geq C \| u \|^2_{\rH^1}$ and the estimate,
\begin{equation*}
    (a+1) \| u\|^3_3 \leq (a+1) \|u \|_2 \cdot \|u \|^2_4 \leq \frac{(a+1)^2}{4}\| u \|^2_2 +  \| u \|^4_4 
\end{equation*}
which follows from H\"older and Young inequalities. We conclude that 
\begin{equation*}
    \frac{1}{2}\dt  \bigl ( \|u\|_2^2+\frac{1}{c}\|w\|_2^2 \bigr )+ C \| u \|^2_{\rH^1} +\frac{b}{c}\|w\|_2^2+\biggl(a-\varepsilon+\delta-\frac{(a+1)^2}{4}\biggr)\|u\|_2^2\leq 0.
\end{equation*}
We make the following assumption:
\[
a-\varepsilon+\delta-\frac{(a+1)^2}{4}\geq 0\,.
\]
Denoting by $\hat{\mathbf{x}} = ( \hat{u}, \hat{w}) $ the solution to the approximated system \eqref{eq:model data}, guaranteed by \autoref{prop: global DA}, an application of \autoref{thm: V norm} implies the following convergence result for the solution $\mathbf{x}$ of the $\eps$ shifted system, for each $\eps>0$.
\begin{cor}[Data assimilation for \eqref{eq:bid}] \mbox{}\\
Let ${\mathbf{x}}_0$, $\hat{\mathbf{x}}_0 \in \rH^1(\Omega) \times \rL^2(\Omega)$. Then, there exist $\mu_0$, $\delta_0>0$ such that for all $\mu>\mu_0$ and $0<\delta < \delta_0$ 
\begin{equation*}
    \| (\mathbf{x} -\hat{\mathbf{x}})(t) \|_{\rH^1(\Omega) \times \rL^2(\Omega)}  \to 0  \ \text{ exponentially, as } \ t \to \infty.
\end{equation*} 
\end{cor}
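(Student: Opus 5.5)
This corollary follows the same template as the previous examples: I verify that the bidomain system, written as \eqref{eq:model} with the operator matrix $A$ and nonlinearity $F$ above, fits the strong framework. Then I apply \autoref{prop: global DA} and \autoref{thm: V norm}. The setting is $\cV^\ast=\rL^2(\Omega)\times\rL^2(\Omega)$, $\cV=\D(A)=\D(\mathbb{A})\times\rL^2(\Omega)$ and $\cH=(\cV^\ast,\cV)_{\frac12,2}$. Since $\mathbb{A}$ is a second-order elliptic operator with Neumann-type conditions, I identify $\cH$ with $\rH^1(\Omega)\times\rL^2(\Omega)$. The $\cH$-convergence of \autoref{thm: V norm} is then exactly the asserted $\rH^1\times\rL^2$ convergence. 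The interpolant assumption \eqref{eq: strong interpolation} yields \eqref{eq: I_delta bound2}, as noted at the start of the section.

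\textbf{Verifying the structural assumptions.} I take (A1) with $\omega=0$ from the computation above, i.e.\ $\langle A\mathbf{x},\mathbf{x}\rangle\geq\alpha\|\mathbf{x}\|_{\D(A)}^2$. This uses the identity $\|(\mathbb{A}+\eps)^{1/2}u\|_2^2\gtrsim\|u\|_{\rH^1}^2$, which follows from the form representation of $\mathbb{A}$ in \cite{MR3834925}.

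For (A2)--(A3), I write $\cV_\beta=[\cV^\ast,\cV]_\beta$. Its first component is $\rH^{2\beta}(\Omega)$ up to boundary conditions, and its second is $\rL^2$. The cubic part satisfies
\[
\|u_1^3-u_2^3\|_2\le C\bigl(\|u_1\|_{\infty}^2+\|u_2\|_{\infty}^2\bigr)\|u_1-u_2\|_2 .
\]
In two dimensions, any $\beta>1/2$ gives $\rH^{2\beta}\hookrightarrow\rL^\infty$. So I can take $\rho=2$ and $\beta_j$ close to $\frac12$, for instance with $\beta=\beta_j$ slightly above $\tfrac12$. Then $\rho(\beta_j-\frac12)+\beta\le1$ holds. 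The quadratic and linear terms are handled the same way with smaller $\rho_j$. The linear coupling $(-w,cu)$ is Lipschitz $\cV^\ast\to\cV^\ast$ and fits with $\rho_j=0$.

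I would double-check this parameter choice against the stated values $\beta=\nicefrac13$, $\rho=2$. Those appear inconsistent with $\beta\in(\frac12,1)$, so I would use the choice above.

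\textbf{Verifying (A4).} Non-blow-up of $\|F(\mathbf{x})\|_{\rL^2_t\cV^\ast}$ comes from \cite[Theorem~4.1]{MR3834925}. Square-integrability of $\|\mathbf{x}\|_\cH^2$ comes from the energy inequality just derived: under $a-\eps+\delta-\frac{(a+1)^2}{4}\ge0$, integrating in time bounds $\int_0^\infty(\|u\|_{\rH^1}^2+\|w\|_2^2)\,\d t$ by the initial energy. \autoref{cor: global shift} then gives the global solution $\mathbf{x}$ of the shifted (here $\eps$-shifted) system. \autoref{prop: global DA} gives the global nudged solution $\hat{\mathbf{x}}$ for $\mu>\mu_0$ and $\delta<\delta_0$; checking (A4) for it is analogous. \autoref{thm: V norm} then yields exponential decay of $\|(\mathbf{x}-\hat{\mathbf{x}})(t)\|_\cH$.

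\textbf{Main obstacle.} The delicate step is the identification $\cH=\rH^1(\Omega)\times\rL^2(\Omega)$ together with the choice of $\beta,\rho_j,\beta_j$ satisfying (A3). Two issues arise here:
\begin{itemize}
\item The second component carries no smoothing, since $\D(A)$ has only $\rL^2$ there. So $F$ must be estimated purely through the $u$-component, using the interpolation spaces of $\mathbb{A}$ extended to $\rL^2$ (mean values included).
\item One must confirm that the trace conditions do not obstruct $[\cdot,\cdot]_{\frac12}$.
\end{itemize}
I would first try the characterization of $\D((\mathbb{A}+\eps)^{1/2})$ as $\rH^1$ via the bounded $\rH^\infty$-calculus from \cite{MR3834925}, and then reduce everything to 2D Sobolev embeddings.
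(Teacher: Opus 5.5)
Your proposal is correct and follows the paper's own route: (A1) with $\omega=0$ from the form identity, (A2)--(A3) for the cubic FitzHugh--Nagumo term, and (A4) from \cite[Theorem~4.1]{MR3834925} plus the $\rL^2$-energy inequality under $a-\eps+\delta-\frac{(a+1)^2}{4}\ge 0$, followed by \autoref{prop: global DA} and \autoref{thm: V norm} with $\cH=\rH^1(\Omega)\times\rL^2(\Omega)$. Your parameter choice $\rho=2$, $\beta=\beta_j\in(\tfrac12,\tfrac23]$ via $\rH^{2\beta}(\Omega)\hookrightarrow\rL^\infty(\Omega)$ correctly repairs the paper's stated $\beta=\nicefrac{1}{3}$, which indeed lies outside the range $(\tfrac12,1)$ required in (A2).
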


\section{Illustrations of the General Framework - weak solutions}\label{sec: Example-weak}
\noindent
In this final section, we present the general framework for analysing the convergence of weak solutions of \eqref{eq:model data} to those of \eqref{eq:model}. To this end, we revisit the system \eqref{eq:2D Stokes} and introduce new examples. Let $\Omega \in \R^n$ be an open set. Set $\cH = \rL^2(\Omega)$ and assume the pairing between $\cV^\ast$ and $\cV$ satisfies
\begin{equation}
    \label{eq: pairing weak}
    \langle u,v \rangle_{\cV^\ast, \cV}= \langle u,v\rangle_\cH = (u,v)_2 \ \text{ for all }\ u \in \rL^2(\Omega) \ \text{ and }\ v\in \cV.
\end{equation}
Concerning the interpolation operator $\mathrm{I}_\delta$, we impose 
\begin{equation}\label{eq: weak interpolation}
    \| f - \mathrm{I}_\delta f \|_{\cV^\ast} \leq C \delta \| f \|_2 \ \text{ for all }\ f \in \rL^2(\Omega),
\end{equation}
Note that the condition for the measurements $\mathrm{I}_\delta$ is relaxed compared to the strong setting. However, in most practical examples, such as those in \cite{MR4344886, MR4409797} the stronger condition \eqref{eq: strong interpolation} is assumed.
\subsection{2D-Navier-Stokes equations - revisited} \mbox{} \\ 
Consider the variational formulation of the \eqref{eq:2D Stokes} on a bounded domain $\Omega$, whose strong formulation was discussed in \autoref{subsec: stokes strong}. 
For this purpose, let $\phi \in \rH^1_0(\Omega;\R^2)$ with $\div \phi =0$ and define the weak Stokes operator by
\begin{equation*}
    A_w \colon \D(A_w) := \rH^1_0 (\Omega;\R^2) \cap \rL^2_\sigma (\Omega;\R^2) \to \rH^{-1}_\sigma (\Omega;\R^2), \ \langle A_w u , \phi \rangle_{\rH^{-1}_\sigma, \D(A_w)} := (\nabla u, \nabla \phi)_2 \ \text{ for all }\ u \in \D(A_w). 
\end{equation*}
Setting 
\begin{equation*}
    \langle F_w (u),\phi \rangle_{\rH^{-1}_\sigma, \D(A_w)} := (u\otimes u , \nabla \phi)_2
\end{equation*}
results in the variational formulation 
\begin{equation}\label{eq: variational Stokes}\tag{weak-2D-Stokes}
    \left\{
    \begin{aligned}
       u'+A_w u&=F_w(u) , \quad t \in (0,T) ,\\
        u(0)&=u_0.
    \end{aligned}
    \right.
\end{equation}
To verify $\bf(A1)$, we calculate for all $u \in \D(A_w)$  in view of Poincar\'e's inequality 
\[
\langle A u, u \rangle_{\rH^{-1}_\sigma,\D(A_w)}=\|\nabla u\|_2^2 \geq \alpha \| u \|_{\D(A_w)}^2 \ \text{ for some }\ \alpha >0.
\]
Moreover, by H\"older's inequality and Sobolev embedding we obtain
\begin{equation*}
    (u\otimes u , \nabla \phi)_2 \leq C\| u\otimes u \|_2 \cdot \| \phi \|_{\rH^1} \leq C\| u\|^2_4 \cdot \| \phi \|_{\rH^1} \leq C\| u \|^2_{\rH^{\nicefrac{1}{2}}} \cdot \| \phi \|_{\rH^1}.
\end{equation*}
Hence, $\bf(A2)$ and $\bf(A3)$ are satisfied with $\beta=\frac{3}{4}$ and $\rho=1$. Since $u \in \D(A_w)$ is an eligible test function, we can test the variational formulation of \eqref{eq:2D Stokes} with $u$, resulting in the energy equality
\begin{equation*}
    \frac{1}{2}\| u(t)\|^2_2 + \int_0^t \| \nabla u\|_{2}^2 \d s = \frac{1}{2}  \| u_0 \|^2_2 \ \text{ for all }\ t \in (0,\infty).
\end{equation*}
This readily implies that $\bf(A4)$ is valid. Denoting by $v$ the approximate solution of \eqref{eq:model data}, guaranteed by \autoref{prop: global DA}, an application of \autoref{thm: V norm} then yields the following result.
\begin{cor}[Data Assimilation for \eqref{eq: variational Stokes}] \mbox{} \\
Let $u_0$, $v_0 \in \rL^2_\sigma(\Omega;\R^2)$. Then, there exist $\mu_0$, $\delta_0>0$ such that for all $\mu>\mu_0$ and $0<\delta < \delta_0$ 
\begin{equation*}
    \| (u -v)(t) \|_{\rL^2(\Omega)}  \to 0  \ \text{ exponentially, as } \ t \to \infty.
\end{equation*}
\end{cor}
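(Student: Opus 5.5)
The plan is to verify the hypotheses of \autoref{thm: V norm} in the weak setting $\cV = \D(A_w)$, $\cH = \rL^2_\sigma(\Omega;\R^2)$, $\cV^\ast = \rH^{-1}_\sigma(\Omega;\R^2)$, and then read off the conclusion.

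First, the functional setting. The triple $(\cV,\cH,\cV^\ast)$ is a Gelfand triple whose pairing satisfies \eqref{eq: pairing weak}. The interpolation identity $(\cV^\ast,\cV)_{\frac12,2}=\cH$ holds because $A_w$ is a positive self-adjoint isomorphism $\cV\to\cV^\ast$. Condition \textbf{(A1)} holds with $\omega=0$ by the Poincar\'e computation above.

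Next, \textbf{(A2)}--\textbf{(A3)}. Since $F_w$ comes from the bilinear form $\Phi(u,\tilde u)$, $\langle\Phi(u,\tilde u),\phi\rangle=(u\otimes\tilde u,\nabla\phi)_2$, I will invoke \autoref{rem: bilinear}. Write $F_w(u_1)-F_w(u_2)=\Phi(u_1-u_2,u_1)+\Phi(u_2,u_1-u_2)$. Using H\"older and the two-dimensional Sobolev embedding $\rH^{1/2}\hookrightarrow\rL^4$, this gives
\[
\|F_w(u_1)-F_w(u_2)\|_{\cV^\ast}\le C\bigl(\|u_1\|_{\cV_{3/4}}+\|u_2\|_{\cV_{3/4}}\bigr)\|u_1-u_2\|_{\cV_{3/4}}.
\]
Here we identify $\cV_{3/4}=[\rH^{-1}_\sigma,\rH^1_0\cap\rL^2_\sigma]_{3/4}$ with a divergence-free subspace of $\rH^{1/2}$, which uses the standard complex interpolation of these scales. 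So $\beta=\beta_1=\nicefrac34$, $\rho_1=1$, and \textbf{(A3)} reads $\tfrac14+\tfrac34=1\le1$.

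Then \textbf{(A4)}. For $u_0\in\rL^2_\sigma$, \autoref{lem: local} gives a local solution in $\rL^2(0,a;\cV)\cap\rH^1(0,a;\cV^\ast)$. This regularity justifies testing with $u$ via Lions--Magenes. Because $(u\otimes u,\nabla u)_2=0$ for divergence-free $u$ vanishing on $\partial\Omega$, we obtain the energy equality. It bounds $\|u\|_{\rL^\infty\cH}$ and $\|u\|_{\rL^2\cV}$ uniformly in time, and it makes $\|u(t)\|_\cH^2$ integrable on $(0,\infty)$ through Poincar\'e. The bound on $F_w(u)$ in $\rL^2(0,t;\cV^\ast)$ follows from Ladyzhenskaya's inequality:
\[
\|u\|_4^2\le C\|u\|_2\|\nabla u\|_2,
\]
so
\[
\|F_w(u)\|_{\rL^2_t\cV^\ast}^2\le C\|u\|_{\rL^\infty\cH}^2\|u\|_{\rL^2\cV}^2<\infty .
\]
Hence \autoref{cor: global shift} applies, and since $\omega=0$ we have $\tilde u=u$. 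By the remark after \autoref{prop: global DA}, \textbf{(A4)} for $v$ is checked the same way: test with $v$ and treat the nudging term as a forcing controlled by \eqref{eq: weak interpolation}.

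Finally, the interpolant. From \eqref{eq: weak interpolation} I obtain \eqref{eq: I_delta bound2} directly:
\[
\langle f-\rI_\delta f,g\rangle\le\|f-\rI_\delta f\|_{\cV^\ast}\|g\|_\cV\le C\delta\|f\|_2\|g\|_\cV .
\]
\autoref{prop: global DA} then yields $v$, and \autoref{thm: V norm} with $\omega=0$ gives exponential decay of $\|(u-v)(t)\|_{\rL^2}$.

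The main obstacle is \textbf{(A4)}. Since the data are only in $\rL^2$, the energy identity has to be justified at the weak level rather than formally, and the $\rL^2_t\cV^\ast$ bound on $F_w$ needs Ladyzhenskaya's inequality. The identification of $\cV_{3/4}$ also needs care, but it is standard.
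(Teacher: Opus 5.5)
Your proposal is correct and follows essentially the same route as the paper: \textbf{(A1)} via Poincar\'e with $\omega=0$, \textbf{(A2)}--\textbf{(A3)} via the bilinear structure and $\rH^{1/2}\hookrightarrow\rL^4$ with $\beta=\nicefrac{3}{4}$, $\rho=1$, \textbf{(A4)} from the energy equality, and then \autoref{prop: global DA} and \autoref{thm: V norm}. You also spell out two steps the paper leaves implicit: the bound on $F_w(u)$ in $\rL^2(0,t;\cV^\ast)$ via Ladyzhenskaya's inequality (which the paper compresses into ``readily implies''), and the derivation of \eqref{eq: I_delta bound2} from \eqref{eq: weak interpolation}.
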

\subsection{1D-Allen-Cahn equation}\cite{AllenCahn} \mbox{} \\
The Allen-Cahn equation is a seminal reaction-diffusion model that arises from a Ginzburg-Landau free energy functional to describe phase separation processes. It is a prototypical example of a system featuring front propagation, where the interface motion is driven by mean curvature, making it a fundamental testbed for studying pattern formation and metastability in materials science (\cite{AllenCahn}).

Let $\Omega =(0,1) \subset \R$. Consider the one-dimensional Allen-Cahn model given by the equations
\begin{equation}
\left \lbrace
    \begin{aligned}
        \partial_t u & = \partial_{xx} u  + u- u^3, \quad &&\text{in } (0,T) \times \Omega , \\
        u &= 0, \quad &&\text{in }  (0,T) \times \partial \Omega  ,\\
        u(0) &= u_0, 
    \end{aligned}
    \right.
    \label{eq: 1D ACE}\tag{1D-Allen-Cahn}
\end{equation}
To reformulate \eqref{eq: 1D ACE} as an abstract evolution equation \eqref{eq:model} we define the weak Dirichlet Laplacian by
$$
\partial_{xx}^w \colon \D(\partial_{xx}^w) = \rH^1_0(\Omega) \to \rH^{-1}(\Omega), \enspace \langle \partial_{xx}^w u, \phi \rangle_{\rH^{-1},\D(\partial_{xx}^w)} = ( \partial_x u , \partial_x \phi)_2 \ \text{ for all } \ u,\phi \in \D(\partial_{xx}^w)
$$
and set 
$$\langle F_w(u), \phi \rangle_{\rH^{-1},\D(\partial_{xx}^w)} := (u,\phi)_2 - (u^3,\phi)_2 \ \text{ for all } \ u,\phi \in \D(\partial_{xx}^w).$$
To verify $\bf(A1)$, note that by Poincar\'e's inequality we have
$$
\langle  \partial_{xx}^w u, u \rangle_{\rH^{-1},\D(\partial_{xx}^w)} = \| \partial_x u \|^2_2  \geq \alpha \| u \|^2_{\rH^1} \ \text{ for all } \ u \in \D(\partial_{xx}^w),
$$
for a constant $\alpha >0$. Regarding $\bf(A2)$, $\bf(A3)$, we estimate using H\"older's inequality and the embedding $ \rL^r(\Omega) \hookrightarrow \rH^{-1}(\Omega) $, which holds for any $r \in (1,\infty)$
\begin{equation*}
\begin{split}
        \| F(u) - F(v) \|_{\rH^{-1}} \leq C \bigl ( \| u-v \|_2 + \| (u^2 + v^2) \abs{u-v}\|_r \bigr)  
        & \leq  C\bigl ( \| u-v \|_2+(  \|u \|^2_{3r} + \| v \|_{3r}^2) \| u-v\|_{3r} \bigr).    
\end{split}
\end{equation*}
In particular, $\bf(A1)$, $\bf(A2)$ are valid for $\beta=2/3$, $\rho=2$ and $r\in (1,2)$ arbitrary. To verify $\bf(A4)$, we test \eqref{eq: 1D ACE} with $u$, which is an eligible test function, to obtain 
\begin{equation*}
    \frac{1}{2}\dt \| u \|^2_2 + \| \partial_x u \|^2_2 + \| u \|^4_4 = \| u \|^2_2 \leq \kappa \| \partial_x u \|^2_2\ \text{ with }\ \kappa \in (0,1)
\end{equation*}
where we used Poincar\'e's inequality in the last step; note that $\kappa \in (0,1)$ since $\kappa = \lambda_1^{-1}$, where $\lambda_1 = \pi^2$ is the smallest eigenvalue of the Dirichlet Laplacian on $(0,1)$, see \cite{Evans2010}. Integrating in time and Gronwall's inequality then imply that $\bf(A4)$ is valid. Denoting by $v$ the approximate solution of \eqref{eq:model data}, guaranteed by \autoref{prop: global DA}, an application of \autoref{thm: V norm} then yields the following result.
\begin{cor}[Data Assimilation for \eqref{eq: 1D ACE}] \mbox{} \\
Let $u_0$, $v_0 \in \rL^2(\Omega)$. Then, there exist $\mu_0$, $\delta_0>0$ such that for all $\mu>\mu_0$ and $0<\delta < \delta_0$ 
\begin{equation*}
    \| (u -v)(t) \|_{\rL^2(\Omega)}  \to 0  \ \text{ exponentially, as } \ t \to \infty.
\end{equation*}
\end{cor}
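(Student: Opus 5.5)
The plan is to obtain this corollary as a direct application of \autoref{thm: V norm}, working in the weak Gelfand triple $\cV = \rH^1_0(\Omega)$, $\cH = \rL^2(\Omega)$, $\cV^\ast = \rH^{-1}(\Omega)$ with $\Omega = (0,1)$. The pairing satisfies \eqref{eq: pairing weak}, and the interpolation identity $(\rH^{-1},\rH^1_0)_{\frac12,2} = \rL^2$ holds. Moreover, \eqref{eq: weak interpolation} implies \eqref{eq: I_delta bound2} by a single application of duality:
\begin{equation*}
\langle f - \rI_\delta f, g\rangle_{\cV^\ast,\cV} \leq \| f - \rI_\delta f\|_{\rH^{-1}} \| g\|_{\rH^1} \leq C\delta \| f\|_2 \| g \|_{\rH^1}.
\end{equation*}
It therefore suffices to check (A1)--(A4) for $A = -\partial^w_{xx}$ and $F_w(u) = u - u^3$. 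Since the Dirichlet Laplacian is coercive, (A1) holds with $\omega = 0$. In that case the shifted problem \eqref{eq:model shift} coincides with \eqref{eq: 1D ACE}, so $\tilde u = u$, and the final clause of \autoref{thm: V norm} gives convergence to $u$ itself.

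For (A2) and (A3), I will use the identification $\cV_\gamma = [\rH^{-1},\rH^1_0]_\gamma \simeq \rH^{2\gamma-1}$, together with the one-dimensional Sobolev embedding $\rH^{s}(0,1)\hookrightarrow \rL^{q}$ for $s \geq \frac12 - \frac1q$. The linear part of $F_w$ is harmless: it contributes a term $\| u_1-u_2\|_{\cV^\ast} \leq \|u_1-u_2\|_{\cV_{\beta_1}}$ with $\rho_1 = 0$, and (A3) then reads $\beta \leq 1$. For the cubic part, I write $u_1^3 - u_2^3 = (u_1^2+u_1u_2+u_2^2)(u_1-u_2)$. Then H\"older's inequality and the embedding $\rL^r \hookrightarrow \rH^{-1}$ for $r>1$ (valid in 1D by duality from $\rH^1_0 \hookrightarrow \rL^\infty$) give
\begin{equation*}
\|u_1^3 - u_2^3\|_{\rH^{-1}} \leq C(\|u_1\|^2_{3r} + \|u_2\|^2_{3r})\|u_1-u_2\|_{3r}.
\end{equation*}
Here $\rho = 2$, and I will choose a common exponent $\beta = \beta_j$ so that $\cV_\beta \hookrightarrow \rL^{3r}$. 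Condition (A3) becomes $2(\beta - \frac12) + \beta \leq 1$, i.e.\ $\beta \leq \frac23$. With $\beta = \frac23$ we have $\cV_\beta = \rH^{1/3}\hookrightarrow \rL^6$, which covers $3r \leq 6$, i.e.\ any $r\in(1,2]$. All constraints are therefore compatible, with $\beta = \frac23 \in (\frac12,1)$.

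For (A4), I will test with $u$. This yields
\begin{equation*}
\tfrac12 \dt \|u\|_2^2 + \|\partial_x u\|_2^2 + \|u\|_4^4 = \|u\|_2^2 \leq \pi^{-2}\|\partial_x u\|_2^2,
\end{equation*}
and hence $\tfrac12\dt\|u\|_2^2 + (1-\pi^{-2})\|\partial_x u\|_2^2 + \|u\|_4^4 \leq 0$. Integrating in time gives $u \in \rL^\infty(0,\infty;\rL^2)\cap \rL^2(0,\infty;\rH^1_0)$, $\|u\|_4^4 \in \rL^1(0,\infty)$, and in particular $\|u\|_2^2$ integrable on $(0,\infty)$ by Poincar\'e's inequality. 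For the bound on $\|F(u)\|_{\rL^2(0,t;\rH^{-1})}$, I will estimate $\|u^3\|_{\rH^{-1}} \leq C\|u^3\|_{1+} $; more simply, I will use $\|u^3\|_{\rH^{-1}}\leq C\|u\|_{\rL^\infty}\|u\|_4^2 \leq C\|u\|_{\rH^1}\|u\|_2\,\|u\|_{\rH^1}^{\ldots}$. The cleanest route is Gagliardo--Nirenberg, $\|u\|_6^3 \leq C\|u\|_2^2\|u\|_{\rH^1}$, which gives $\|u^3\|_{\rH^{-1}}^2 \leq C\|u^3\|_2^2 \leq C\|u\|_{\rL^\infty(\rL^2)}^4\|u\|_{\rH^1}^2 \in \rL^1(0,\infty)$. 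The same computation applies to the nudged solution $v$, where the nudging term is controlled via \eqref{eq: I_delta bound2}, as in the proof of \autoref{prop: global DA}.

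With (A1)--(A4) verified, \autoref{prop: global DA} provides the global solution $v$, and \autoref{thm: V norm} yields exponential decay of $\|(u-v)(t)\|_2$ for $\mu$ large and $\delta$ small. I expect the main obstacle to be the exponent bookkeeping in (A2)/(A3): one has to ensure that the complex interpolation scale $\cV_\beta$ really matches $\rH^{2\beta-1}$ with Dirichlet conditions, and that $\beta = \frac23$ sits exactly at the (A3) borderline while still embedding into $\rL^{3r}$. The borderline case $\rho(\beta-\frac12)+\beta = 1$ is allowed by (A3), so this should go through.
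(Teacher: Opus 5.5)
Your proposal is correct and follows the paper's argument essentially step by step. It uses the same triple $\rH^1_0 \hookrightarrow \rL^2 \hookrightarrow \rH^{-1}$ and coercivity with $\omega=0$ (so $\tilde u = u$). It then uses the same H\"older/$\rL^r\hookrightarrow\rH^{-1}$ estimate, which gives (A2)--(A3) with $\beta=\frac23$, $\rho=2$ at the borderline of (A3), and the same energy identity with Poincar\'e constant $\pi^{-2}$ for (A4), before applying \autoref{prop: global DA} and \autoref{thm: V norm}. Your extra details are correct and fill in what the paper leaves implicit: the duality check that \eqref{eq: weak interpolation} implies \eqref{eq: I_delta bound2}, and the Gagliardo--Nirenberg bound $\|u\|_6^3\le C\|u\|_2^2\|u\|_{\rH^1}$ that controls $\|F(u)\|_{\rL^2(0,t;\rH^{-1})}$. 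You should simply delete the unfinished intermediate chain that precedes the ``cleanest route''.
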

\subsection{1D and 2D Cahn-Hilliard equation}\cite{CahnHilliard} \mbox{}\\
The Cahn-Hilliard equation is a foundational fourth-order partial differential equation that models phase separation in binary mixtures while, crucially, conserving the total mass of each component. It describes complex coarsening dynamics, such as spinodal decomposition, where a system lowers its free energy by forming distinct phase domains, making it indispensable in materials science and the study of fluid mixtures (\cite{CahnHilliard}).

Let $\Omega \subset \mathbb{R}^d$, with $d =1,2,$ be an open and bounded domain with a smooth boundary. We consider the $d$-dimensional Cahn-Hilliard equation
\begin{equation}
    \left \lbrace
    \begin{aligned}
        \partial_t u + \Delta^2 u &= \Delta f(u), \quad &\text{ in } &(0,T) \times  \Omega,\\
        \nabla u \cdot \nu & = 0, \quad  & \text{ in } &(0,T) \times \partial \Omega, \\
         \nabla (\Delta u) \cdot \nu  &= 0, & \text{ in } &(0,T) \times \partial \Omega, \\
         u(0)& = u_0,
    \end{aligned}
    \right.
    \label{eq: Cahn-Hilliard}
\end{equation}
where $\nu$ denotes the outward unit vector. The nonlinear term is given by $f(u) = u^3 -u$, which corresponds to the derivative of the standard double-well potential, but more general $f$ can be chosen, see for instance \cite[Section 5.1]{AgrestiVeraar24}. To cast \eqref{eq: Cahn-Hilliard} in the abstract evolution framework \eqref{eq:model}, we consider the spaces
$$
\cV =  \rH^2_N(\Omega) =\left \lbrace  u \in \rH^2(\Omega) \; | \; \nabla u \cdot \nu = 0, \; \nabla (\Delta u) \cdot \nu = 0 \text{ in }\partial \Omega\right \rbrace, \quad \cH = \rL^2(\Omega), \quad  \cV^\ast =  \left(\rH^2_N(\Omega) \right)^\ast.
$$
The linear operator $A \colon \cV \to \cV^\ast $ and the nonlinear operator $F \colon \cV \to \cV^\ast $ are given by
$$
\langle Au, v \rangle_{\cV^\ast, \cV } = \int_{\Omega} \Delta u \,  \Delta v  \, dx, \quad \langle F(u), \phi \rangle_{\cV^\ast, \cV }  = \int_{\Omega} ( \Delta f (u)) \,  \phi \,  dx.
$$
We now verify the necessary assumptions on $A$ and $F$. The operator $A$ is quasi-coercive. Indeed, by elliptic regularity theory for the Laplacian operator with Neumann boundary conditions, we find that there exists $\alpha>0$ such that 
$$
\langle Au, u \rangle_{\cV^\ast, \cV} = \| \Delta u\|_2^2 \geq \alpha \| u \|_{\cV}^2 - \| u \|_{\cH}^2.
$$
Thus $\bf(A1)$ holds with $\omega = 1$. 

To verify the local Lipschitz condition for $F$, we estimate $\| F(u_1)- F(u_2) \|_{\cV^\ast}$. Using integration by parts and the boundary conditions, we have
\begin{equation*}
\begin{split}
    \| F(u_1)- F(u_2) \|_{\cV^\ast} &= \sup_{ \| \phi \|_\cV \leq 1}  \abs{\langle  F(u_1)- F(u_2), \phi \rangle } \\
    & = \sup_{ \| \phi \|_\cV \leq 1} \abs{ \int_{\Omega} \nabla (f(u_1)- f(u_2)) \cdot \nabla \phi dx}\\
    &=\sup_{\| \phi \|_\cV \leq 1} \abs{ \int_{\Omega} (f(u_1)- f(u_2)) \cdot \Delta \phi \, dx}.
    \end{split}
\end{equation*}
Now, observe that, for the specific choice $f(u) = u^3 -u$ we can find a bound for the difference 
$$
\abs{f(u_1) - f(u_2)} \leq C (1+ u_1^2 + u_2^2)  \abs{u_1 - u_2},
$$
where $C$ is a positive constant that will change line by line. Applying this bound, the triangle inequality and H\"older's inequality yields  
\begin{equation*}
\begin{split}
    \| F(u_1)- F(u_2)\|_{\cV^\ast} &\leq C \sup_{ \| \phi \|_\cV \leq 1} \int_{\Omega} (1+ \abs{u_1}^2 + \abs{u_2}^2)  \abs{ u_1 - u_2}  \abs{ \Delta \phi}  \, dx \\
    & \leq  \| (1+ \abs{u_1}^2 + \abs{u_2}^2)  ( u_1 - u_2) \|_2 \\
    &\leq \|1+  \abs{u_1}^2 + \abs{u_2}^2 \|_{p_1}  \| u_1 - u_2 \|_{p_2}\\
    &\leq C ( 1+ \| u_1 \|_{2p_1}^2 + \|u_2\|_{2p_1}^2)   \| u_1 - u_2 \|_{p_2},
    \end{split}
\end{equation*}
where $\frac{1}{2} = \frac{1}{p_1}+ \frac{1}{p_2}$. Choosing $p_1 = 3$ and $p_2 = 6$, we obtain
$$
 \| F(u_1)- F(u_2)\|_{\cV^\ast} \leq C ( 1+ \| u_1 \|_{6}^2 + \|u_2\|_{6}^2)   \| u_1 - u_2 \|_{6}. 
$$
By Sobolev embedding theorems, for the interpolation space $\mathcal{V}_{\beta} = [\mathcal{V}^*, \mathcal{V}]_\beta \hookrightarrow H^{4\beta - 2}(\Omega)$, we have $\mathcal{V}_\beta \hookrightarrow L^6(\Omega)$ for $\beta \geq 7/12$ if $d=1$, and for $\beta \geq 2/3$ if $d=2$. Thus, choosing $\beta_j = \beta = 2/3$ and $\rho=2$, both \textbf{(A2)}-\textbf{(A3)} are satisfied. Consequently, by \autoref{lem: local}, there exists a unique local solution to \eqref{eq: Cahn-Hilliard} in the class $\rL^2_t(\rH^2_x) \cap \rH^1_t(\rH^{-2}_x) \cap \mathrm{BUC}_t(\rL^2_x)$ on a maximal time interval $[0, t_+(u_0))$.

To establish global existence, we consider the shifted problem
\begin{equation}
\left \lbrace
    \begin{aligned}
        \Tilde{u}' + (A+ I) \Tilde{ u} &= F(\Tilde{u}), \\
         \Tilde{u}(0) &= u_0,
    \end{aligned}
    \label{eq: shifted Cahn Hilliard}
    \right.
\end{equation}
and verify the conditions of $\bf(A4)$. The functional
$$
E[\Tilde{u}] = \int_{\Omega}  \frac{1}{2}\abs{ \nabla \Tilde{u}}^2 + \frac{1}{4} \abs{\Tilde{u}}^4 \, dx
$$
is a Lyapunov functional for \eqref{eq: shifted Cahn Hilliard}. In other words, if $u  \in \rL^2_t (\rH^2_x) \cap \rH^1_t(\rH^{-2}_x) \cap \mathrm{BUC}_t(\rL^2_x) $ denotes a maximal solution of \eqref{eq: shifted Cahn Hilliard}, then it can be checked that
$$
\dt  E[\Tilde{u}(t)] \leq  - \| \nabla \varphi (t) \|_2^2- \| \Tilde{u}(t) \|_4^4,
$$
where $\varphi := f(u) - \Delta u.$ Following the previous computations, we have
$$
\| F(\Tilde{u}(t)) \|_{\cV^\ast} \leq \| f(\Tilde{u}(t))\|_2 = \| \Tilde{u}^3(t) - \Tilde{u}(t) \|_2 \leq \| \Tilde{u}(t)\|_6^3 + \|\Tilde{u}(t)\|_2 .
$$
By the embeddings $\rH^2(\Omega) \hookrightarrow \rL^6(\Omega) \hookrightarrow \rL^2(\Omega) $, which hold for $d =1,2$ in the bounded domain $\Omega$, we deduce, thanks to the Lyapunov functional, that $ \lim \limits_{t \to t_+}\| F(\Tilde{u}(t)) \|_{\cV^\ast} < \infty$. 

Next, we verify that $\| \Tilde{u}(t) \|_2^2 \in \rL^1(0,\infty).$ Taking the inner product of \eqref{eq: shifted Cahn Hilliard}$_1$ with $\Tilde{u}$ gives
$$
\frac{1}{2}\dt\|  \Tilde{u}(t)\|_2^2 + \| \Delta \Tilde{u}(t) \|_2^2 + \| \Tilde{u}(t) \|_2^2 = (\Delta f(\Tilde{u}), \Tilde{u})_2. 
$$
Integrating the right-hand side by parts, we get
$$
 (\Delta f(\Tilde{u}), \Tilde{u})_2 = -( f'(\Tilde{u}) \nabla \Tilde{u}, \nabla \Tilde{u})_2 = ((1- 3\Tilde{u}^2) \nabla \Tilde{u} , \nabla \Tilde{u} )_2 = \| \nabla \Tilde{u} \|_2^2 - 3 \int_{\Omega} \Tilde{u}^2 \abs{\nabla \Tilde{u}}^2 \, dx.
$$
This leads to the inequality
$$
\frac{1}{2}\dt\| \Tilde{u}(t)\|_2^2 + \| \Delta \Tilde{u}(t) \|_2^2 + \| \Tilde{u}(t) \|_2^2  + 3 \int_{\Omega} \Tilde{u}^2 \abs{\nabla \Tilde{u}}^2 \, dx \leq   \| \nabla \Tilde{u} \|_2^2.
$$
Using integration by parts, Cauchy-Schwarz inequality, and Young inequality on the right-hand side, we get
$$
\| \nabla \Tilde{u }\|_2^2 \leq - \int_{\Omega} \Tilde{u}  \, \Delta  \Tilde{u} \, dx \leq \|\Tilde{ u} \|_2 \| \Delta \Tilde{u} \|_2 \leq \frac{1}{2} \|\Tilde{u}\|_2^2 + \frac{1}{2}\| \Delta \Tilde{u}\|_2^2 .
$$
Substituting this back and absorbing terms gives
$$
\frac{1}{2}\dt\| \Tilde{u}(t)\|_2^2 + \frac{1}{2}\| \Delta \Tilde{u}(t) \|_2^2 + \frac{1}{2} \| \Tilde{u}(t) \|_2^2  + 3\int_{\Omega} \Tilde{u}^2 \abs{\nabla \Tilde{u}}^2 \, dx \leq  0.
$$
Integrating this differential inequality in time from $0$ to $\infty$, we conclude that $\|\Tilde{u}(t) \|_2^2 \in \rL^1(0,\infty).$ Denoting by $v$ the approximate solution of \eqref{eq:model data}, guaranteed by \autoref{prop: global DA}, we apply \autoref{thm: V norm} and we obtain the following result.
\begin{cor}[Data Assimilation for \eqref{eq: Cahn-Hilliard}] \mbox{} \\
Let $u_0$, $v_0 \in \rL^2(\Omega)$. Then, there exist $\mu_0$, $\delta_0>0$ such that for all $\mu>\mu_0$ and $0<\delta < \delta_0$ 
\begin{equation*}
    \| (\Tilde{u} -v)(t) \|_{\rL^2(\Omega)}  \to 0  \ \text{ exponentially, as } \ t \to \infty.
\end{equation*}
\end{cor}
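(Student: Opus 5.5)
The plan is to apply \autoref{thm: V norm} directly to the Cahn-Hilliard setting. The hypotheses have all been checked in the discussion above. Take $\cV = \rH^2_N(\Omega)$, $\cH=\rL^2(\Omega)$ and $\cV^\ast = (\rH^2_N(\Omega))^\ast$. Then $\bf(A1)$ holds with $\omega = 1$, and $\bf(A2)$--$\bf(A3)$ hold with $\beta_j=\beta=\nicefrac{2}{3}$ and $\rho=2$; condition $\bf(A3)$ reads $2(\nicefrac{2}{3}-\nicefrac{1}{2})+\nicefrac{2}{3} = 1 \le 1$. Condition $\bf(A4)$ for the shifted solution $\tilde u$ of \eqref{eq: shifted Cahn Hilliard} follows from two facts established above:
\begin{itemize}
\item the Lyapunov functional $E$, which controls $\|F(\tilde u)\|_{\cV^\ast}$ through $\|\tilde u\|_6^3+\|\tilde u\|_2$;
\item the energy inequality, which gives $\|\tilde u\|_2^2\in \rL^1(0,\infty)$.
\end{itemize}
From these, \autoref{cor: global shift} yields a unique global $\tilde u \in \rL^2(0,\infty;\rH^2_N)\cap \rH^1(0,\infty;\cV^\ast)$.

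Next I check the observation operator. The weak interpolation bound \eqref{eq: weak interpolation}, together with the pairing \eqref{eq: pairing weak}, gives $\langle f-\rI_\delta f,g\rangle_{\cV^\ast,\cV}\le \|f-\rI_\delta f\|_{\cV^\ast}\|g\|_\cV\le C\delta\|f\|_2\|g\|_\cV$, which is \eqref{eq: I_delta bound2}. \autoref{prop: global DA} then provides $\mu_0,\delta_0$ and a global solution $v$ of \eqref{eq:model data}. Here I rely on the remark that $\bf(A4)$ for the nudged system follows by the same energy argument, because $A+\mu\rI_\delta$ is coercive with $\omega=0$ and the forcing $\mu\rI_\delta\tilde u$ is controlled in $\rL^2(0,\infty;\cV^\ast)$.

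Finally, \autoref{thm: V norm}, possibly after enlarging $\mu_0$ and shrinking $\delta_0$, gives exponential decay of $\|w(t)\|_\cH = \|(\tilde u - v)(t)\|_2$. This is exactly the claim, since the comparison is with the shifted solution $\tilde u$ rather than with $u$.

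The main obstacle is the term $-\omega\tilde u$ in \eqref{eq:model difference}, which is present because $\omega=1\neq 0$. In the Gronwall step it contributes the inhomogeneity $C\|\tilde u\|_{\rL^2_t\cV^\ast}^2$. For a genuinely exponential rate I would not simply bound this term by a constant. Instead I would keep it inside the Duhamel form of Gronwall, $\|w(t)\|_2^2\lesssim e^{-\lambda t}\|w_0\|_2^2+\int_0^t e^{-\lambda(t-s)}\|\tilde u(s)\|_2^2\,\d s$. I would then use that $\|\tilde u(t)\|_2$ itself decays exponentially. That decay comes from the differential inequality $\tfrac12\dt\|\tilde u\|_2^2+\tfrac12\|\tilde u\|_2^2\le 0$ obtained above, which gives $\|\tilde u(t)\|_2^2\le e^{-t}\|u_0\|_2^2$. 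Choosing $\mu$ large so that $\lambda\neq 1$, the convolution is bounded by $C e^{-\min(\lambda,1)t}$, and this yields the exponential convergence.
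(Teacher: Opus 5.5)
Your proof is correct and follows the paper's route up to the last step. You check \textbf{(A1)} with $\omega=1$, \textbf{(A2)}--\textbf{(A3)} with $\beta=\nicefrac{2}{3}$ and $\rho=2$ (with equality in \textbf{(A3)}), and \textbf{(A4)} via the Lyapunov functional and the $\rL^2$ energy inequality. You then invoke \autoref{cor: global shift}, \autoref{prop: global DA} and \autoref{thm: V norm}.

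The paper stops at that black-box application of \autoref{thm: V norm}. Your real departure is how you treat the forcing $-\omega\tilde u$ in \eqref{eq:model difference}, and here your version is the one that actually works. In the proof of \autoref{thm: V norm}, the paper bounds $\|\tilde u\|^2_{\rL^2_t\cV^\ast}$ by a constant and multiplies it by the decaying exponential. With a negative rate, however, Gronwall only yields the Duhamel term $\int_0^t \mathrm{e}^{-\lambda(t-s)}\, C\omega\|\tilde u(s)\|^2_{\cV^\ast}\d s$. Knowing only $\|\tilde u\|_\cH^2\in\rL^1(0,\infty)$, which is all \textbf{(A4)} provides, this term tends to zero but need not decay exponentially; a source like $(1+s)^{-2}$ gives only polynomial decay.

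You insert $\|\tilde u(t)\|_2^2\le \mathrm{e}^{-t}\|u_0\|_2^2$, which follows from the inequality $\tfrac12\dt\|\tilde u\|_2^2+\tfrac12\|\tilde u\|_2^2\le 0$ already derived for the shifted Cahn--Hilliard equation. This gives an honest rate $\min(\lambda,1)$. So the paper's argument is formally general but incomplete when $\omega>0$. Yours uses decay of $\tilde u$ specific to this problem, and is rigorous. Your last paragraph should therefore replace, not supplement, the appeal to \autoref{thm: V norm}.

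One caveat, inherited from the paper's formulation: the rate $\lambda$ has the form $2(\mu-C-C\mu^2\delta^2)$. It is positive only if $\mu$ is large \emph{and} $\mu\delta^2$ is small. Enlarging $\mu_0$ and shrinking $\delta_0$ independently therefore cannot cover all $\mu>\mu_0$ at fixed $\delta$; the smallness of $\delta$ must be tied to $\mu$.
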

{\bf Acknowledgements. }{\small Gianmarco Del Sarto, Matthias Hieber and Tarek Z\"{o}chling acknowledge the support from the DFG project FOR~5528. The research of Filippo Palma is carried on under the auspices of GNFM-INdAM.}

\end{document}